\documentclass[11pt]{amsart}
\usepackage[english]{babel}
\usepackage{amssymb}
\usepackage{amsmath}
\usepackage{booktabs}
\usepackage{url}
\usepackage{tikz}

\usepackage{xcolor}
\definecolor{darkred}{rgb}{0.45,0,0}
\definecolor{darkred}{rgb}{0.6,0,0}
\definecolor{darkblue}{rgb}{0,0,0.7}

\usepackage[
    colorlinks, 
    citecolor=blue, 
    urlcolor=darkred, 
    final, 
    hyperindex, 
    pagebackref,
    linkcolor = darkblue
]{hyperref}

\usepackage[capitalize]{cleveref}

\newcommand{\define}[1]{{\bf \boldmath{#1}}\index{#1}}

\newcommand{\ignore}[1]{}

\newtheorem{dummy}{Dummy}

\newtheorem{theorem}[dummy]{Theorem}
\newtheorem{proposition}[dummy]{Proposition}
\newtheorem{corollary}[dummy]{Corollary}

\theoremstyle{definition}

\newcommand{\maps}{\colon}    
\newcommand{\iso}{\xrightarrow{\raisebox{-0.2ex}{$\scriptstyle\sim$}}}
\newcommand{\tr}{\operatorname{tr}}
\newcommand{\Tr}{\operatorname{Tr}}

\newcommand{\R}{{\mathbb R}}  
\newcommand{\C}{{\mathbb C}}  
\newcommand{\Z}{{\mathbb Z}}  
\renewcommand{\H}{{\mathbb H}}  
\renewcommand{\O}{{\mathbb O}}  

\newcommand{\Aut}{{\rm Aut}}  
\newcommand{\SO}{{\rm SO}}    
\newcommand{\SU}{{\rm SU}}    
\newcommand{\SL}{{\rm{SL}}}  
\newcommand{\E}{{\rm E}}       
\newcommand{\G}{{\rm G}}       
\newcommand{\Hom}{\mathrm{Hom}}

\newcommand{\A}{\mathcal{A}}
\newcommand{\J}{\mathcal{J}}
\newcommand{\V}{\mathcal{V}}

\renewcommand{\sl}{\mathfrak{sl}} 
\newcommand{\e}{{\mathfrak{e}}}   
\newcommand{\g}{{\mathfrak{g}}}  

\newcommand{\M}{\mathrm{M}}   
\newcommand{\h}{\mathfrak{h}}  

\newcommand{\End}{{\rm End}} 

\subjclass[2020]{Primary: 17A35, Secondary: 17C40, 20G20}
\keywords{Nonassociative algebras, first Tits construction, Jordan algebras, Albert algebras, complex Lie groups, octonions}

\author{John Baez}
\address{School of Mathematics\\
\\ University of Edinburgh, James Clerk Maxwell Building, Peter Guthrie Tait Road, Edinburgh, UK EH9 3FD.}
\email{baez@math.ucr.edu}

\author{Susanne Pumpl\"un}
\address{School of Mathematical Sciences\\
University of Nottingham\\
University Park\\
Nottingham NG7 2RD, United Kingdom}
\email{susanne.pumpluen@nottingham.ac.uk}

\begin{document}

\title{Three-dimensional Geometry \\ in Exceptional Mathematics}

\begin{abstract} 
We review some topics in ``exceptional mathematics'' from the perspective of 3-dimensional geometry: the octonions $\O$, the  split octonions $\O'$, the bioctonions $\O_\C \cong \C \otimes_\R \O$, the complex Albert algebra $\h_3(\O_\C)$, and the complex form of the exceptional Lie algebra $\e_6$.   We show how to functorially build an algebra isomorphic to $\O$ from any 3d complex vector space equipped with an inner product and complex volume form.  Similarly, we build one isomorphic to $\O'$ starting from a 3d real vector space equipped with a volume form, and one isomorphic to $\O_\C$ starting from a 3d complex vector space equipped with a complex volume form.  We give applications to 3-dimensional real and complex manifolds.   Finally, we describe how to build an Jordan algebra isomorphic to $\h_3(\O_\C)$ starting from three 3d complex vector spaces equipped with complex volume forms.  This last construction gives a nice explicit description of the complex Lie algebra $\e_6$ and its subalgebra $\sl(3,\C) \oplus \sl(3,\C) \oplus \sl(3,\C)$.
\end{abstract}

\maketitle

\section{Introduction}

The relation between the octonions $\O$ and 8-dimensional geometry are well known, and they have ripple effects into many other aspects of what one might call ``exceptional mathematics'' \cite{Ad,Ba,Y}.   But there is also a fascinating connection between the octonions equipped with a chosen square root of $-1$ and 3-dimensional complex geometry.  Just as we can describe the quaternions $\H$ as $\R \oplus \R^3$ equipped with a product built using operations familiar in 3-dimensional real geometry---the dot product and cross product---we can describe the octonions $\O$ as $\C \oplus \C^3$ equipped with a product built from analogous operations in 3-dimensional complex geometry.  

This construction privileges one embedding of algebras $\C \hookrightarrow \O$ among all the possible choices---or in other words, one square root of $-1$ among the whole 6-sphere of square roots of $-1$ in $\O$.  It thus breaks much of the symmetry inherent in the octonions.  However, this is just one of a number of constructions that relate 3-dimensional geometry to exceptional mathematics.   These constructions are not only systematic (i.e., functorial): they are reversible, so they provide equivalences between categories.  In summary, we have:

\begin{itemize}
\item Thm.\ \ref{thm:quaternion_functor}: The category of 3-dimensional oriented real inner product spaces is equivalent to the category of algebras isomorphic to the quaternions $\H$.  
\item Thm.\ \ref{thm:octonion_functor}: The category of 3-dimensional complex inner product spaces with normalized complex volume form is equivalent to the category of algebras isomorphic to the octonions $\O$ equipped with a chosen square root of $-1$.
\item Thm.\ \ref{thm:split_octonion_functor}: The category of 3-dimensional real vector spaces with volume form is equivalent to the category of algebras isomorphic to the split octonions $\O'$ equipped with a chosen nontrivial square root of $+1$.
\item Thm.\ \ref{thm:bioctonion_functor}: The category of 3-dimensional complex vector spaces with complex volume form is equivalent to the category of algebras isomorphic to the bioctonions $\O_\C$ equipped with a chosen nontrivial square root of $-1$.
\end{itemize}

We give some easy applications of these results to the geometry of real and complex 3-manifolds, but we expect that we are just skimming the surface.  As another sort of application---the one that originally motivated us to write this paper---we discuss the relation between the complex Albert algebra $\h_3(\O_\C)$ and 3-dimensional geometry.

Algebraists talk about Jordan algebras like loving teachers talk about their students: they are all either special or exceptional.  Any associative algebra $A$ becomes a Jordan algebra with product $a \circ b = \tfrac{1}{2} (a  b + b a)$.  Any Jordan subalgebra of one of this type is called ``special''.  All the rest are called ``exceptional''.  
Over $\C$, the only \emph{simple} exceptional Jordan algebra is the complex Albert algebra
\[ \h_3(\O_\C) \; = \;\; \left\{  \left( \begin{array}{ccc}  
                         \alpha  &  z  & y^*    \\  
                         z^*       & \beta & x      \\ 
                         y       & x^*   & \gamma   
\end{array} \right) : \; \alpha,\beta,\gamma \in \C, \; x,y,z \in \O_\C \right\} .\]
Here $\ast \maps \O_\C \to \O_\C$ is the complex-linear extension of $\ast \maps \O \to \O$ from $\O \subset \O_\C$ to $\O_\C$.  

However, there is another way to obtain an isomorphic Jordan algebra by putting a suitable product on
\[  \J = \M_3(\C) \oplus \M_3(\C) \oplus \M_3(\C).\]
This construction was developed by Tits \cite{Tits} in 1966, and later developed by McCrimmon \cite{McC69} and others.  It is a special case of what is now called the ``first Tits construction''.  The existence of an isomorphism between $\J$ and $\h_3(\O_\C)$ is well known \cite{PR86}.  An explicit formula for such an isomorphism is less well known, but can be found in Br\"uhne's thesis \cite{Br}.  In Theorem \ref{thm:iso}, we give a self-contained treatment of this isomorphism $\Phi \maps \J \to \h_3(\O_\C)$.  

In Section \ref{sec:e_6} we conclude with an application to the 78-dimensional exceptional Lie group $\E_6$.  The complex form of this group consists of all linear transformations of the complex Albert algebra preserving its cubic norm.   If we describe the complex Albert algebra as $\J$, we obtain a construction of the complex Lie algebra $\e_6$ that is a veritable  three-ring circus of 3-dimensional geometry.  This construction also gives access to an important inclusion of Lie algebras
\[   \sl(3,\C) \oplus \sl(3,\C) \oplus \sl(3,\C) \hookrightarrow \e_6 ,\]
which we describe in detail.

\subsection*{Terminology}

In mathematical physics $\h_3(\O)$ is often called ``the'' Albert algebra or ``the'' exceptional Jordan algebra, just as $\O$ is called ``the'' octonions.   Algebraists, on the other hand, have generalized these concepts dramatically: they define octonion algebras and Albert algebras axiomatically over an arbitrary field, commutative ring \cite{Al21,GPR,Pe19}, algebraic variety \cite{PST,Pu08}, or even a locally ringed space \cite{Ach1,Ach2}.  Even the least of these generalizations, working over a field, forces a change an attitude, since there can be several octonion algebras or Albert algebras over a given field. 

In this paper we work solely over $\R$ and $\C$.  All the above nuances evaporate over $\C$, because the complex numbers are algebraically closed: over $\C$ there is just one octonion algebra, the bioctonions $\O_\C$, and just one Albert algebra, the complex Albert algebra $\h_3(\O_\C)$.  But we see a tiny taste of these nuances over $\R$, because there we cannot trade a square root of $-1$ for a  square root of $+1$ by multiplying by $i$.  Over $\R$ there are thus two octonion algebras, the octonions $\O$ and split octonions $\O'$.  There are also three Albert algebras over $\R$, but we do not discuss these here.

In what follows, all vector spaces will be assumed finite-dimensional.  
By an \define{algebra} we mean a vector space $A$ with a bilinear map called the multiplication or product, often denoted simply by juxtaposition, and a \define{unit} $1 \in A$ such that $1x=x1=x$ for all $x\in A$. 

\section{Quaternions and octonions}
\label{sec:octonions}

By the octonions $\O$ we mean the unique 8-dimensional normed division algebra over $\R$.  There are many ways to construct this algebra \cite{Ba}, but here we describe one based on their relation to 3-dimensional complex geometry.   This approach copies ideas from the quaternions, $\H$: the unique 4-dimensional normed division algebra over $\R$.

A standard way to construct the quaternions takes $\H =  \R \oplus \R^3$
with the product given by 
\begin{equation}
\label{eq:quaternion_product}
(\alpha , \mathbf{a})(\beta , \mathbf{b}) = (\alpha \beta -  \mathbf{a}, \cdot \mathbf{b}, \; \alpha \mathbf{b} + \beta \mathbf{a} + \mathbf{a} \times \mathbf{b}) .
\end{equation}
This construction uses only $\SO(3)$-equivariant bilinear operations on scalars and 3-dimensional vectors, and it uses essentially all of them.  This is why quaternions were so useful before Gibbs and Heaviside isolated the dot product and cross product \cite{Crowe}.  This also instantly implies that $\SO(3)$ acts as automorphisms of the quaternions.  In fact, $\Aut(\H) = \SO(3)$.

Copying this story, we can build the octonions as $\O = \C \oplus \C^3$ equipped with the product
\[  (\alpha , \mathbf{a})(\beta , \mathbf{b}) = (\alpha \beta -  \langle \mathbf{a}, \mathbf{b}\rangle, \; \alpha \mathbf{b} + \beta \mathbf{a} + \mathbf{a} \; \overline{\!\times\!} \; \mathbf{b}) .\]
Here $\langle \mathbf{a}, \mathbf{b}\rangle$ is the usual inner product of vectors in $\C^3$, while $\mathbf{a} \; \overline{\!\times\!} \; \mathbf{b}$ is the componentwise complex conjugate of the cross product $\mathbf{a} \times \mathbf{b}$.  

\begin{proposition}
\label{prop:octonions_from_complex_vectors}
If we define multiplication on $\C \oplus \C^3$ by
\begin{equation}
\label{eq:octonion_product}
(\alpha , \mathbf{a})(\beta , \mathbf{b}) = (\alpha \beta -  \langle \mathbf{a}, \mathbf{b}\rangle, \; \alpha \mathbf{b} + \beta \mathbf{a} + \mathbf{a} \; \overline{\!\times\!} \; \mathbf{b}) 
\end{equation}
then this space becomes a 8-dimensional normed division algebra over the real numbers, which is therefore isomorphic to the octonions.
\end{proposition}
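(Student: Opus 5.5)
Define the norm $|(\alpha,\mathbf{a})|^2 = |\alpha|^2 + \|\mathbf{a}\|^2$, the standard Euclidean norm on $\C\oplus\C^3\cong\R^8$. Here $\langle \mathbf{a},\mathbf{b}\rangle = \sum_i \bar a_i b_i$, taking the convention conjugate-linear in the first slot; the opposite convention just swaps roles. The plan is to show that $(1,0)$ is a unit and that $|xy| = |x||y|$ for all $x,y$. Hurwitz's theorem then says that a finite-dimensional real unital algebra with a multiplicative positive-definite quadratic norm is $\R$, $\C$, $\H$ or $\O$. Since the dimension is $8$, the algebra is $\O$, and in particular it is a division algebra, since $xy=0$ forces $|x||y|=0$.

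Bilinearity over $\R$ is clear: $\langle\,,\rangle$ is sesquilinear, $\overline{\times}$ is conjugate-bilinear, and both are in particular $\R$-bilinear. That $(1,0)$ is a two-sided unit follows directly from the formula (\ref{eq:octonion_product}).

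The main step is the norm identity
\[ |\alpha\beta - \langle \mathbf a,\mathbf b\rangle|^2 + \|\alpha\mathbf b + \beta\mathbf a + \mathbf a\,\overline{\times}\,\mathbf b\|^2 = (|\alpha|^2+\|\mathbf a\|^2)(|\beta|^2+\|\mathbf b\|^2). \]
I would expand both sides and match terms using three facts from $3$-dimensional complex geometry:
\begin{enumerate}
\item Lagrange's identity over $\C$: $\|\mathbf a\times\mathbf b\|^2 = \|\mathbf a\|^2\|\mathbf b\|^2 - |\langle\mathbf a,\mathbf b\rangle|^2$, which holds because $\mathbf a\times\mathbf b$ is complex-bilinear with $\sum_{i<j}|a_ib_j-a_jb_i|^2$ as its squared norm. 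Conjugation preserves norms, so the same holds for $\overline{\times}$.
\item $\langle \mathbf a, \mathbf a\,\overline{\times}\,\mathbf b\rangle = \sum_i \bar a_i\overline{(\mathbf a\times\mathbf b)_i} = \overline{\det(\mathbf a,\mathbf a,\mathbf b)} = 0$, and similarly with $\mathbf b$ in the first slot.
\item $\|\alpha\mathbf b+\beta\mathbf a\|^2 = |\alpha|^2\|\mathbf b\|^2 + |\beta|^2\|\mathbf a\|^2 + 2\,\mathrm{Re}(\bar\alpha\beta\langle\mathbf b,\mathbf a\rangle)$.
\end{enumerate}

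By fact (2), the cross terms between $\alpha\mathbf b+\beta\mathbf a$ and $\mathbf a\,\overline{\times}\,\mathbf b$ vanish. This step needs care. The real part of $\langle \alpha\mathbf b, \mathbf a\,\overline\times\,\mathbf b\rangle$ is $\mathrm{Re}(\bar\alpha \langle\mathbf b,\mathbf a\,\overline\times\,\mathbf b\rangle)=0$ because the inner product itself vanishes, not just its real part. That is exactly why the conjugated cross product is used: with the ordinary cross product one would get $\sum \bar b_i (\mathbf a\times\mathbf b)_i$, which is not a determinant and need not vanish.

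The first summand contributes $|\alpha|^2|\beta|^2 + |\langle\mathbf a,\mathbf b\rangle|^2 - 2\,\mathrm{Re}(\bar\alpha\bar\beta\langle\mathbf a,\mathbf b\rangle)$. I would check that this last cross term cancels $2\,\mathrm{Re}(\bar\alpha\beta\langle\mathbf b,\mathbf a\rangle)$ from fact (3). I expect this to be the main obstacle. As literally written the two terms are $\mathrm{Re}(\bar\alpha\bar\beta\langle \mathbf a,\mathbf b\rangle)$ and $\mathrm{Re}(\bar\alpha\beta\,\overline{\langle\mathbf a,\mathbf b\rangle})$, and these do not cancel in general. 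The fix I would try first is to read the formula with the conjugation conventions arranged so that the cancellation works. For instance, interpret the scalar multiplications $\alpha\mathbf b,\beta\mathbf a$ and the inner product so that the algebra restricts to the Cayley--Dickson form $\O=\H\oplus\H\ell$, or use the bilinear pairing $\sum a_ib_i$ in place of the Hermitian one. I would test this on basis vectors, for example $\mathbf a=\mathbf b=e_1$ with $\alpha,\beta$ imaginary.

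Once a consistent reading is fixed, the remaining terms are $\|\mathbf a\,\overline\times\,\mathbf b\|^2+|\langle\mathbf a,\mathbf b\rangle|^2=\|\mathbf a\|^2\|\mathbf b\|^2$ by fact (1). Together with the cross terms cancelled above, this gives the identity, and Hurwitz's theorem finishes the proof.
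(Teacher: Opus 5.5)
Your mixed-term computation is right, and it shows more than an obstacle: the statement as printed is false, whichever convention you use for $\langle\,,\rangle$. Two checks:
\begin{itemize}
\item The element $(i,0)$ commutes with every element, since $(i,0)(\beta,\mathbf b)=(i\beta,i\mathbf b)=(\beta,\mathbf b)(i,0)$. This cannot happen in $\O$, whose center is $\R$.
\item Your own proposed test case produces zero divisors: $(i,\mathbf e_1)(i,-\mathbf e_1)=(i^2+1,\,-i\mathbf e_1+i\mathbf e_1)=0$.
\end{itemize}
The paper's proof only cites \cite{Pu24} and \cite{BS}, so it never meets this issue. A direct verification like yours is what exposes the misprint.

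The gap in your proposal is that you stop at ``once a consistent reading is fixed'' without supplying one. Also, one of the readings you float fails. With the bilinear pairing $\sum_k a_kb_k$, the vector $\mathbf a=(1,i,0)$ gives $(0,\mathbf a)^2=(-\mathbf a\cdot\mathbf a,0)=0$, and Lagrange's identity no longer holds.

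The missing idea is that $\C$ must act on $\C^3$ from the right through conjugate scalars. In $\O$ a unit $i$ anticommutes with each $v\perp\C$, so $v\beta=\bar\beta v$. The correct product is
\[
(\alpha,\mathbf a)(\beta,\mathbf b)=\Bigl(\alpha\beta-\textstyle\sum_k a_k\bar b_k,\ \alpha\mathbf b+\bar\beta\mathbf a+\mathbf a\mathbin{\overline{\times}}\mathbf b\Bigr).
\]
In your convention $\langle\mathbf a,\mathbf b\rangle=\sum_k\bar a_kb_k$, the scalar part reads $\alpha\beta-\langle\mathbf b,\mathbf a\rangle$.

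Now set $w=\sum_k a_k\bar b_k$ and recheck your steps:
\begin{itemize}
\item The scalar part contributes the mixed term $-2\,\mathrm{Re}(\alpha\beta\bar w)$. The term $\|\alpha\mathbf b+\bar\beta\mathbf a\|^2$ contributes $2\,\mathrm{Re}\sum_k\alpha b_k\,\overline{\bar\beta a_k}=+2\,\mathrm{Re}(\alpha\beta\bar w)$. These cancel.
\item Your fact (2) becomes $\sum_k b_k(\mathbf a\times\mathbf b)_k=\sum_k a_k(\mathbf a\times\mathbf b)_k=0$, which still holds because these are determinants with a repeated row.
\item Lagrange's identity is unchanged, and $(1,0)$ is still a two-sided unit.
\end{itemize}
With this correction your norm identity holds, and the Hurwitz argument finishes the proof exactly as you planned.
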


\begin{proof}
A version of this result over arbitrary commutative rings can be found in Section 2.2 of \cite{Pu24}.  The result at just the present level of generality is Lemma 3 of \cite{BS}.
\end{proof}

The inner product and conjugated cross product $\; \overline{\! \times \!} \; $ are $\SU(3)$-equivariant operations on $\C^3$, so $\SU(3)$ acts as algebra automorphisms of $\O = \C \oplus \C^3$.  Unlike the quaternion case, $\SU(3)$ is not the whole automorphism group of $\O$, which is the larger 14-dimensional group $\G_2$.  Instead, $\SU(3)$ gives only all the automorphisms of $\O$ that act trivially on the chosen subalgebra $\C \subset \O = \C \oplus \C^3$.  Indeed, the subgroup of $\Aut(\O)$ preserving any chosen square root of $-1$ is isomorphic to $\SU(3)$ \cite[Thm.\ 1.9.1]{Y}.

We can state these constructions of the quaternions and octonions in a more modern way as follows.  First consider the quaternionic case.  Suppose $V$ is a 3-dimensional real vector space with an inner product $\cdot \maps V \times V \to \R$ and orientation.  These structures determine a volume form $\omega \in \Lambda^3 V^\ast$, which turn lets us define an antisymmetric bilinear map 
\[   \times \maps V \times V \to V \]
by
\[   (\mathbf{a} \times \mathbf{b}, \mathbf{c}) = \omega(\mathbf{a},\mathbf{b},\mathbf{c})  \]
for all $\mathbf{a},\mathbf{b},\mathbf{c} \in V$. We can then make $\R \oplus V$ into an algebra using Equation \eqref{eq:quaternion_product}, and this algebra is isomorphic to $\H$.  Polishing this result still further, we obtain:

\begin{theorem}
\label{thm:quaternion_functor}
There is a functor $H$ from the category of 
\begin{itemize}
\item 3-dimensional oriented real inner product spaces, and orientation-preserving isometries
\end{itemize}
to the category of
\begin{itemize}
\item real algebras isomorphic to the quaternions, and algebra isomorphisms
\end{itemize}
sending $V$ to $H(V) = \R \oplus V$ equipped with the product in Equation \eqref{eq:quaternion_product}.  This functor $H$ is an equivalence of categories.
\end{theorem}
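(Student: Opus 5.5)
We must show three things: that $H$ is a well-defined functor, that it is fully faithful, and that it is essentially surjective.

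\emph{Functoriality.} Given an orientation-preserving isometry $f\maps V\to W$, set $H(f) = 1\oplus f \maps \R\oplus V\to\R\oplus W$. Since $f$ preserves the inner product and orientation, it sends the volume form of $W$ to that of $V$. Then $(f\mathbf a\times f\mathbf b,f\mathbf c)=\omega_W(f\mathbf a,f\mathbf b,f\mathbf c)=\omega_V(\mathbf a,\mathbf b,\mathbf c)=(\mathbf a\times\mathbf b,\mathbf c)=(f(\mathbf a\times\mathbf b),f\mathbf c)$, so $f$ preserves the cross product. It also preserves the dot product, so $H(f)$ preserves the product \eqref{eq:quaternion_product}. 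Functoriality is immediate. Choosing an oriented orthonormal basis identifies $V$ with $\R^3$ carrying the standard dot and cross products, so $H(V)\cong\H$.

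\emph{Essential surjectivity.} Let $A\cong\H$. Define $V=\{a\in A : a^2\in \R_{\le 0}\cdot 1\}$, the imaginary part, which corresponds to $\R^3\subset\H$ and is therefore an intrinsically defined 3-dimensional subspace. Give it the inner product $\mathbf a\cdot\mathbf b=-\tfrac12(\mathbf a\mathbf b+\mathbf b\mathbf a)$. For the orientation, declare an orthonormal basis $(\mathbf e_1,\mathbf e_2,\mathbf e_3)$ positive iff $\mathbf e_1\mathbf e_2=\mathbf e_3$. For the standard $\H$ this is the usual orientation, and any algebra isomorphism $A\cong\H$ transports all of these structures, so they are well defined. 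Since the structures are defined intrinsically, a computation in $\H$ shows that the natural map $\R\oplus V\to A$, $(\alpha,\mathbf a)\mapsto \alpha+\mathbf a$, is an algebra isomorphism $H(V)\cong A$.

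\emph{Full faithfulness.} Any algebra isomorphism $\phi\maps H(V)\to H(W)$ fixes $1$ and preserves the intrinsic subspaces of imaginary elements, so it has the form $1\oplus f$ for a linear $f\maps V\to W$; this already gives faithfulness. The map $f$ preserves $-\tfrac12(\mathbf a\mathbf b+\mathbf b\mathbf a)$, which equals the dot product, so it is an isometry. It also preserves the antisymmetric part $\tfrac12(\mathbf a\mathbf b-\mathbf b\mathbf a)=\mathbf a\times\mathbf b$. Hence $f$ maps an oriented orthonormal basis (where $\mathbf e_1\times\mathbf e_2=\mathbf e_3$) to an orthonormal basis with the same relation, which is oriented in $W$, so $f$ is orientation-preserving.

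The main obstacle is making the intrinsic characterization of $V$ and the recovery of the orientation watertight. Specifically, one must check that the imaginary quaternions are exactly the elements whose square is a nonpositive real, and that $\mathbf e_1\times\mathbf e_2=\mathbf e_3$ characterizes positively oriented orthonormal bases. The latter follows from $\omega(\mathbf e_1,\mathbf e_2,\mathbf e_3)=(\mathbf e_1\times\mathbf e_2)\cdot\mathbf e_3$. Everything else is a routine calculation.
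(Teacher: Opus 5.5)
Your proof is correct and follows essentially the same route as the paper. You recover $V$ as the imaginary part of $A$, read off the inner product from products of imaginary elements, and fix the orientation by whether $ij = k$ or $ij = -k$ for an orthonormal basis. The only difference is packaging: you check full faithfulness and essential surjectivity directly (the latter is actually immediate from $A \cong \H \cong H(\R^3)$), whereas the paper asserts that this same recovery construction is an inverse to $H$ up to natural isomorphism.
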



\begin{proof}
It is easy to check that an orientation-preserving isometry $f \maps V \to V'$ between 3-dimensional oriented real inner product spaces induces an algebra isomorphism $H(f) \maps \R \oplus V \to \R \oplus V'$, and that $H(fg) = H(f) H(g)$.   Thus, $H$ becomes a functor.  To show that $H$ is an equivalence, note that we can recover a 3-dimensional oriented real inner product space $V$ from any algebra $A \cong \H$  as follows.   First, there is a functorial splitting $A \cong \R \oplus V$ where $\R$ is spanned by $1 \in A$ and $V$ is spanned by the square roots of $-1$.  Second, the 3-dimensional space $V$ has a unique inner product such that the unit sphere consists of square roots of $-1$.  Third, for any orthornormal basis $i,j,k \in V$ we must have $i j = \pm k$, and if we say the basis is right-handed when $i j = k$, this determines an orientation on $V$.  This construction of an oriented 3-dimensional real inner product space from an algebra isomorphic to the quaternions is an inverse, up to natural isomorphism, to the functor $H$.
\end{proof}

The fact that $H$ is an equivalence is a way of making precise the idea that Hamilton captured essentially all the structure of a 3-dimensional oriented real inner product space with his invention of quaternions.  We can also apply the functor $H$ to a vector bundle whose fibers are 3-dimensional oriented real inner product spaces and obtain a bundle of algebras isomorphic to $\H$.  Thus, we obtain results like this:   

\begin{corollary}
For any oriented Riemannian 3-manifold $M$, there is a bundle of algebras $H(TM)$ whose fibers are algebras isomorphic to the split octonions, obtained by applying the functor $Z$ of Thm.\ \ref{thm:quaternion_functor} to the tangent bundle $T M$.
\end{corollary}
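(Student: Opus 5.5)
The plan is to read the statement with the functor $Z$ taken to be the split-octonion functor of Thm.\ \ref{thm:split_octonion_functor}. That functor goes from 3-dimensional real vector spaces with volume form, and volume-preserving linear maps, to real algebras isomorphic to $\O'$. The notation $H(TM)$ in the statement should then be read as $Z(TM)$. The argument is the same fiberwise-functor argument the paper uses for $H$. I assume only the existence and functoriality of $Z$, together with the fact that $Z(V)$ is built from $V$ by $\mathrm{GL}(V)$-natural, multilinear algebraic operations involving the volume form. I do not need its explicit formula.

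First I will produce the required structure on $TM$. Since $M$ is an oriented Riemannian 3-manifold, it has a Riemannian volume form $\omega \in \Omega^3(M)$. At $x$, $\omega_x$ is the unique 3-form taking the value $1$ on positively oriented orthonormal frames of $T_xM$. Thus each fiber $(T_xM, \omega_x)$ is an object of the source category of $Z$, and I set $Z(TM)_x := Z(T_xM,\omega_x)$.

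Second, I will make this family into a smooth vector bundle. Take a cover of $M$ by open sets $U_\alpha$ carrying positively oriented orthonormal frames $e^\alpha$. These give trivializations $\phi_\alpha \maps TM|_{U_\alpha} \iso U_\alpha \times \R^3$, which at each point are isomorphisms $(T_xM,\omega_x) \to (\R^3, \mathrm{std})$. The transition functions $g_{\alpha\beta}(x) \in \SO(3) \subset \SL(3,\R)$ preserve the standard volume form, so they are morphisms in the source category of $Z$. Applying $Z$ gives algebra isomorphisms $Z(g_{\alpha\beta}(x))$ of the fixed algebra $Z(\R^3,\mathrm{std}) \cong \O'$. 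Functoriality of $Z$, namely $Z(fg)=Z(f)Z(g)$ and $Z(1)=1$, turns the cocycle condition for $g_{\alpha\beta}$ into the cocycle condition for $Z(g_{\alpha\beta})$. Hence these glue to a bundle whose fiber over $x$ is canonically $Z(T_xM,\omega_x)$, independent of the chosen frame.

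The only point needing care, and the one I expect to be the main obstacle, is \emph{smoothness}. The multiplication must be a smooth bundle map $Z(TM)\otimes Z(TM)\to Z(TM)$, and the transition functions must be smooth. For the transition functions, I will check that $f \mapsto Z(f)$ is a polynomial, hence smooth, map $\SL(3,\R) \to \mathrm{GL}(8,\R)$. This holds because $Z(V)$ is assembled from $V$, associated spaces such as $V^\ast$ or $\Lambda^2 V$, and scalars, and $Z(f)$ acts by $f$, its inverse transpose, and the identity. For the product, in each trivialization it is the constant product of $Z(\R^3,\mathrm{std})$, so it is smooth, and the transition maps are algebra isomorphisms. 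Each fiber is then isomorphic to $\O'$ by Thm.\ \ref{thm:split_octonion_functor}, which proves the corollary.
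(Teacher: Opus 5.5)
Your argument is correct for the statement you chose to prove, but it is not the statement the paper intends. The corollary as printed is self-contradictory, and you resolved the contradiction in the opposite direction from the paper.

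The paper's reading is fixed by context. The sentence immediately before the corollary says that applying the functor $H$ fiberwise to a bundle of 3-dimensional oriented real inner product spaces gives a bundle of algebras isomorphic to $\H$. That is the paper's whole proof. Three further clues point the same way: the hypothesis ``oriented Riemannian'' is exactly the source category of $H$, the bundle is named $H(TM)$, and the citation is Thm.~\ref{thm:quaternion_functor}. The words ``split octonions'' and ``$Z$'' are residue from the split-octonion corollary at the end of Section~\ref{sec:split_octonion_algebras}.

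What you actually prove is that $Z(TM,\omega)$, with $\omega$ the Riemannian volume form, is a bundle of split octonion algebras. That is a special case of the later corollary, and a symptom is that you use the metric only to produce $\omega$. Your proof never constructs the 4-dimensional bundle $H(TM)$ with quaternion fibers, which is the claim at issue.

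The repair is immediate, because your gluing argument is the fiberwise argument the paper has in mind, and it is more careful than the paper's one-line remark. It goes through with $H$ in place of $Z$:
\begin{itemize}
\item Each $T_xM$, with the metric $g_x$ and the orientation, is an object of the source category of $H$.
\item Transition functions between positively oriented orthonormal frames lie in $\SO(3)$. They are orientation-preserving isometries of $\R^3$, hence morphisms for $H$.
\item $H(f)=\mathrm{id}_\R\oplus f$ depends linearly on $f$, so smoothness of the transition functions is trivial.
\item Functoriality gives the cocycle condition.
\item In each trivialization the product is the constant product \eqref{eq:quaternion_product} on $\R\oplus\R^3$.
\end{itemize}

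More directly, $H(TM)$ is the vector bundle $(M\times\R)\oplus TM$ with product $(\alpha,\mathbf{a})(\beta,\mathbf{b})=(\alpha\beta-g(\mathbf{a},\mathbf{b}),\ \alpha\mathbf{b}+\beta\mathbf{a}+\mathbf{a}\times\mathbf{b})$, where $g(\mathbf{a}\times\mathbf{b},\mathbf{c})=\omega(\mathbf{a},\mathbf{b},\mathbf{c})$. This product is smooth because $g$ and $\omega$ are smooth, and every fiber is isomorphic to $\H$ by Thm.~\ref{thm:quaternion_functor}.
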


\noindent For a similar idea in a purely algebraic context see \cite{Pu98}.

Now let us carry out a similar program for the octonions, which are the example of main interest here.   Suppose $V$ is any 3-dimensional complex vector space with a complex inner product $\langle -, - \rangle \maps V \times V \to \C$.  This determines an isomorphism $V^\ast \cong V$ and an inner product on each grade $\Lambda^n V^\ast$ of the exterior algebra of $V^\ast$.   Suppose we choose an element $\omega \in \Lambda^3 V^\ast$ with $\langle \omega, \omega \rangle = 1$; we call $\omega$ a \define{normalized complex volume form} on $V$.  There is then an antisymmetric conjugate-bilinear map 
\[   \overline{\times} \maps V \times V \to V \]
determined by 
\[   \langle \mathbf{a} \, \overline{\times} \, \mathbf{b}, \mathbf{c} \rangle =  \omega(\mathbf{a}, \mathbf{b}, \mathbf{c}) .\]
We can make $\C \oplus V$ into an algebra using Equation \eqref{eq:octonion_product}, and this algebra is isomorphic to $\O$.
 Furthermore:


\begin{theorem}
\label{thm:octonion_functor}
There is a functor $O$ from the category of 
\begin{itemize}
\item 3-dimensional complex inner product spaces with normalized complex volume form, and unitary operators preserving the volume form
\end{itemize}
to the category of
\begin{itemize}
\item real algebras isomorphic to the octonions, equipped with chosen square root of $-1$, and algebra isomorphisms preserving this chosen square root of $-1$
\end{itemize}
sending $V$ to $\C \oplus V$ given the product in Equation \eqref{eq:octonion_product}.   This functor $O$ is an equivalence of categories.
\end{theorem}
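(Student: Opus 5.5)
The plan mirrors the proof of Thm.~\ref{thm:quaternion_functor}: first show that $O$ is a functor, then build an explicit inverse up to natural isomorphism.

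\textbf{Functoriality.} Given $V$ with normalized volume form $\omega$, Prop.~\ref{prop:octonions_from_complex_vectors} shows that $\C \oplus V$ with product \eqref{eq:octonion_product} is isomorphic to $\O$. To see this, pick an orthonormal basis of $V$ with $\omega(e_1,e_2,e_3)=1$; this is possible because $\langle\omega,\omega\rangle=1$ lets us rescale by a phase. Such a basis identifies $V$ with $\C^3$ carrying its standard structures. The chosen square root of $-1$ is $(i,0)$.

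A unitary $f \maps V \to V'$ preserving the volume forms preserves $\langle -,-\rangle$ and $\omega$, and hence preserves $\overline{\times}$. Therefore $1 \oplus f$ is an algebra isomorphism fixing $i$. Compatibility with composition is clear.

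\textbf{Inverse.} Let $A \cong \O$ with a chosen $I$ satisfying $I^2=-1$. Then $\C_I = \R 1 \oplus \R I$ is a subalgebra. Let $V = \C_I^\perp$ with respect to the norm inner product; it is $6$-dimensional over $\R$. Left multiplication by $I$ preserves $V$ and squares to $-1$ on it, by alternativity and the fact that $L_I$ is an isometry, so $V$ is a 3-dimensional complex vector space.

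The complex inner product is recovered from the product itself. For $a,b \in V$, the $\C_I$-component of $ab$ is $-\langle a,b\rangle$. Concretely, set
\[ \langle a,b\rangle = (a,b) + I(\ldots), \]
where the real part is the real inner product $(a,b)$ and the imaginary part is read off as $-$ the $I$-coefficient of $ab$. Likewise $\overline{\times}$ is the $V$-component of $ab$, and we define $\omega(a,b,c) = \langle a \,\overline{\times}\, b, c\rangle$.

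\textbf{Main obstacle.} The hardest step is verifying that these recovered data have the required properties: $\langle-,-\rangle$ is sesquilinear in the paper's convention, $\overline{\times}$ is conjugate-bilinear, and $\omega$ is a complex-linear alternating $3$-form with $\langle\omega,\omega\rangle=1$.

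To handle this, I would not check the identities abstractly. Instead I would use the known result that the stabilizer of $I$ in $\Aut(\O)$ acts transitively, which is in fact the content of Prop.~\ref{prop:octonions_from_complex_vectors}. Choose an isomorphism $\phi \maps A \to \O = \C\oplus\C^3$. Because $\G_2$ acts transitively on the 6-sphere of imaginary unit octonions, $\phi$ can be chosen with $\phi(I) = (i,0)$. Then $\phi$ carries $\C_I$ and $V$ to $\C$ and $\C^3$. Since every recovered structure is defined intrinsically from the product and $I$, each one is transported to the standard structure on $\C^3$. So all required properties hold because they hold for $\C^3$, and $\omega$ is normalized because the standard volume form is.

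\textbf{Equivalence.} This recovery is functorial: an isomorphism preserving $I$ preserves $\C_I$, the orthogonal complement $V$, and all the structures extracted from the product. Applying it to $O(V)$ returns $V$ with its original $\langle-,-\rangle$ and $\omega$, because the product formula \eqref{eq:octonion_product} recovers exactly these. Conversely, for $(A,I)$ the map $\C\oplus V \to A$, $(\alpha,v)\mapsto \alpha + v$, is a natural algebra isomorphism sending $i$ to $I$. Hence $O$ is essentially surjective and fully faithful, and therefore an equivalence of categories.
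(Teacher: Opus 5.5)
Your proposal is correct, but it verifies the key properties by a different route from the paper.

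\textbf{The paper's route.} It recovers $V=\C^\perp$, the complex inner product $\langle v,w\rangle=(v,w)+i(iv,w)$, and $\omega(u,v,w)=(uv,w)$ directly. It then checks by hand, using composition-algebra identities, that $\omega$ is alternating: imaginary elements satisfy $uv+vu=-2(u,v)1$, and multiplication by them is skew-adjoint. Normalization comes from multiplicativity of the norm and an orthonormal-basis computation.

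\textbf{Your route.} You read $\langle-,-\rangle$ and $\overline{\times}$ off the $\C_I$- and $V$-components of $ab$. You then transport everything along an isomorphism $\phi\maps A\to\C\oplus\C^3$ with $\phi(I)=(i,0)$, which exists because $\Aut(\O)=\G_2$ acts transitively on the $S^6$ of unit imaginary octonions. One sentence is garbled: what you actually use is this transitivity of all of $\Aut(\O)$. It is not a property of the stabilizer of $I$, and it is not the content of Prop.~\ref{prop:octonions_from_complex_vectors}; it is a separate standard fact.

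\textbf{What the transport buys.} The following all follow at once from the model $\C\oplus\C^3$:
\begin{itemize}
\item sesquilinearity of $\langle-,-\rangle$ and conjugate-bilinearity of $\overline{\times}$;
\item complex trilinearity, alternation and normalization of $\omega$;
\item the fact that $(\alpha,v)\mapsto\alpha+v$ is an algebra isomorphism from $O$ of the recovered space onto $A$.
\end{itemize}
Also, recovering from $O(V)$ returns $V$ on the nose by \eqref{eq:octonion_product}, since you read the structures directly off the product.

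These are points the paper's proof leaves implicit or handles only partly. Its $\omega(u,v,w)=(uv,w)$ is real-valued as written, and complex-linearity is never checked. Essential surjectivity, $A\cong O(V_A)$, is not spelled out. Its normalization step invokes \eqref{eq:octonion_product} inside $A$, which presupposes exactly the identification your transport argument supplies.

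\textbf{What it costs.} You depend on the nontrivial external fact of $\G_2$-transitivity on $S^6$. The paper's direct computation, suitably completed, would show which octonion identities make the recovered $\omega$ alternating and of unit norm, without needing that fact.
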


\begin{proof}
The construction of the functor $O$ is similar to the proof of Thm.\ \ref{thm:quaternion_functor}, but we use some standard facts about the octonions \cite{Br}.  Note that for any algebra $A$, choosing a square root of $-1$ in $A$ is the same as choosing a monomorphism $\C \hookrightarrow A$.  To check that $O$ is an equivalence, we must recover a 3-dimensional complex inner product space $V$ with normalized volume form from any algebra $A \cong \O$ equipped with a monomorphism $\C \hookrightarrow \O$.   There is a unique real inner product $(-,-)$ on $A$ such that the unit sphere of $A$ contains $1$ and all square roots of $-1$ in $A$.  Using this we can split $A$ as $\C \oplus V$ where $\C$ is the chosen subalgebra and $V = \C^\perp$.  Left multiplication by any element of the subalgebra $\C$ preserves $V$, so $V$ becomes a complex vector space, and there is then a unique complex inner product $\langle - , - \rangle$ on $V$ such that $\langle v, w \rangle = (v,w) + i(iv,w)$ for all $v, w \in V$.   (Note we use the convention where $\langle - , - \rangle$ is complex-linear in the \emph{second} argument.)

The question then is how to recover the normalized volume form $\omega \in \Lambda^3 V^\ast$.   For this we set $\omega(u,v,w) = (uv, w)$ for all $u,v,w \in V$.   We can see that $\omega$ is antisymmetric in its first two arguments because $u,v,w \in V$ are ``pure imaginary'': that is, orthogonal to $1$ with respect to the real inner product.  This implies $u^2 = - (u, u)1$, and polarizing we obtain $u v + v u = -2 (u,v) 1$, and since also $w \in V$ we have the desired antisymmetry $(u v, w) + (v u, w) = 0$.  Moreover multiplication by pure imaginary elements is skew-adjoint, so $(u v, w) = -(u , w v)$.   Thus
\[   (u v , w) = -(v, u w) = -(u w, v) , \]
so $\omega$ is also antisymmetric in its last two arguments.  

Finally, we need to show $\omega$ is normalized. For any complex-orthonormal basis $j,k,\ell$ of $V$, the induced complex inner product on $\Lambda^3 V^\ast$ gives
\begin{equation}
\label{eq:omega_norm_squared}
  \langle \omega, \omega \rangle = |\omega(j,k,\ell)|^2,
\end{equation}
since $j,k,\ell$ orthonormal makes the alternating form $\langle j, - \rangle \wedge \langle k, - \rangle \wedge \langle \ell, - \rangle$ a unit vector spanning $\Lambda^3 V^\ast$.   Now $\omega(j,k,\ell) = (jk, \ell)$, and by Equation \eqref{eq:octonion_product}
the vector part of $j k$ is $j \mathbin{\overline{\times}} k$, so
\[
  \omega(j,k,\ell) = \langle j \, \overline{\times} \, k, \ell \rangle .
\]
Multiplicativity of the octonion norm applied to Equation \eqref{eq:octonion_product} gives
\[
  \langle j, k \rangle^2 + |j \mathbin{\overline{\times}} k|^2
   = |j|^2 |k|^2 .
\]
Since $j$ and $k$ are orthogonal unit vectors, this implies $|j \mathbin{\overline{\times}} k| = 1$.  Moreover $j \mathbin{\overline{\times}} k$ is orthogonal to $k$ with respect to the complex inner product, because
\[   \langle j \, \overline{\times} \, k, k \rangle = \omega(j,k,k) = 0. \]
Similarly $j \mathbin{\overline{\times}} k$ is orthogonal to $j$.
Since $j \mathbin{\overline{\times}} k$ is a unit vector orthogonal to both $j$ and $k$, it equals $e^{i\theta} \ell$ for some $\theta \in \R$, so 
\[  \omega(j,k,\ell) = \langle j \, \overline{\times} \, k, \ell \rangle =  e^{-i\theta}, \]
and $\langle \omega, \omega \rangle = 1$ by Equation \ref{eq:omega_norm_squared}.
\end{proof}

Again we can apply this construction fiberwise to any suitable vector bundle.  Suppose $M$ is a smooth 6-dimensional manifold. An \define{almost complex structure} on $M$ is a smoothly varying choice of complex structure $J$ on the tangent spaces of $M$.   An almost complex structure on $M$ allows us to define a complex line bundle $\Lambda^{3,0} T^\ast M$ whose sections are complex differential forms of type $(3,0)$.  A nowhere vanishing section of this bundle is called a \define{complex volume form} on $M$.   An almost complex structure on $M$ together with a Riemannian metric $g$ satisfying $g(J-,J-) = g(-,-)$ is called an \define{almost hermitian structure} on $M$.  An almost hermitian structure on $M$ together with a complex volume form normalized so that $\langle \omega_p, \omega_p \rangle = 1$ for each $p \in M$ is called a \define{$\SU(3)$ structure} on $M$, because it serves to reduce the structure group of the tangent bundle to $\SU(3)$ \cite{Joyce}.  An $\SU(3)$-structure is just the right structure on $M$ to apply Thm.\ \ref{thm:octonion_functor} to each tangent space.  

\begin{corollary}
\label{cor:SU(3)_structure}
For any 6-dimensional manifold with an $\SU(3)$ structure, there is a bundle of algebras $O(TM)$ whose fibers are algebras isomorphic to the octonions.   This is obtained by applying the functor $O$ of Thm.\ \ref{thm:octonion_functor} to each fiber of the tangent bundle $TM$.
\end{corollary}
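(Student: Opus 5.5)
The plan is to apply the functor $O$ of Thm.~\ref{thm:octonion_functor} fiberwise and check that the result is a smooth vector bundle with smoothly varying multiplication. First, at each point $p \in M$ the $\SU(3)$ structure makes $T_pM$ into an object of the source category of $O$. The almost complex structure $J_p$ turns $T_pM$ into a 3-dimensional complex vector space. The hermitian metric gives a complex inner product $\langle v,w\rangle = g(v,w) + i\,g(Jv,w)$ (linear in the second slot), and this is well-defined because $g(J-,J-)=g$. The form $\omega_p$ is a $(3,0)$-form, i.e.\ a complex-trilinear alternating form on $(T_pM,J_p)$, with $\langle \omega_p,\omega_p\rangle = 1$. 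So $O(T_pM) = \C \oplus T_pM$, with product given by Equation~\eqref{eq:octonion_product}, is a real algebra isomorphic to $\O$ by Thm.~\ref{thm:octonion_functor} (or Prop.~\ref{prop:octonions_from_complex_vectors} after choosing a unitary frame).

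Second, I would assemble these fibers into the bundle $O(TM) = \underline{\C} \oplus TM$, where $\underline{\C} = M \times \C$ is the trivial bundle. I need the fiberwise product to be a smooth bundle map $O(TM) \otimes O(TM) \to O(TM)$. The terms $\alpha\beta$, $\alpha\mathbf{b}+\beta\mathbf{a}$ and $\langle \mathbf{a},\mathbf{b}\rangle$ are visibly smooth, since $g$ and $J$ are smooth. The only nontrivial term is $\overline{\times}$, which is defined implicitly by $\langle \mathbf{a}\,\overline{\times}\,\mathbf{b},\mathbf{c}\rangle = \omega(\mathbf{a},\mathbf{b},\mathbf{c})$. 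Its smoothness follows because it is obtained from the smooth section $\omega$ by contracting with the smooth isomorphism $T^\ast M \cong TM$ that the metric induces (conjugate-linearly, with respect to $J$).

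The cleanest way to organize this, and the step I would treat with care, is to use local frames. An $\SU(3)$ structure admits local smooth unitary frames $e_1,e_2,e_3$ with $\omega(e_1,e_2,e_3)=1$: take any smooth local unitary frame by Gram--Schmidt, then rescale $e_3$ by the smooth unit-modulus function $\overline{\omega(e_1,e_2,e_3)}$, using normalization. In such a frame the product is exactly the standard formula on $\C \oplus \C^3$, with constant structure constants. So the local trivializations are algebra isomorphisms to $\O$. The transition functions take values in $\SU(3)$, and they act by algebra automorphisms precisely by the functoriality of $O$ in Thm.~\ref{thm:octonion_functor}. This exhibits $O(TM)$ as the associated bundle $P \times_{\SU(3)} \O$ of the $\SU(3)$-frame bundle $P$, which is a bundle of algebras with fibers isomorphic to the octonions.
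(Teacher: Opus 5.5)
Your proposal is correct and takes the same approach as the paper: apply the functor $O$ of Thm.~\ref{thm:octonion_functor} to each tangent space $T_pM$, using the hermitian metric and the normalized $(3,0)$-form supplied by the $\SU(3)$ structure. The paper gives no further argument, so your use of local special unitary frames, whose $\SU(3)$-valued transition functions act by algebra automorphisms by functoriality so that $O(TM)\cong P\times_{\SU(3)}(\C\oplus\C^3)$, supplies the smoothness and local-triviality details the paper leaves implicit.
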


\section{Split octonion algebras}
\label{sec:split_octonion_algebras}

Algebraists have a general concept of ``octonion algebra'' over a field: it is an 8-dimensional unital but not necessarily associative algebra $A$ equipped with a \define{quadratic norm}: a nondegenerate quadratic form $N \maps A \to k$ that obeys
\[         N(ab) = N(a) N(b)   .\]
Of course, this usage conflicts with the term ``norm'' in analysis, but we will have to live with that.  Over $\R$ there are just two nonisomorphic octonion algebras: the octonions $\O$ and the split octonions $\O'$.  The norm for the octonions is positive definite, but the split octonions have a norm of has signature $(4,4)$: evenly split between positive and negative signs.  

The split octonions were discovered by Max Zorn \cite{Z} in 1935. He expressed them as matrices where the diagonal elements are real numbers and the off-diagonal elements are vectors in $\R^3$, multiplied in the following peculiar way:
\begin{equation}
\label{eq:zorn_product}
\begin{pmatrix} \alpha & \mathbf{a}\\ \mathbf{a}' & \alpha' \end{pmatrix}
\begin{pmatrix} \beta & \mathbf{b}\\ \mathbf{b}' & \beta' \end{pmatrix}
=
\begin{pmatrix}
 \alpha\beta+ \mathbf{a}\cdot\mathbf{b}' & \alpha\mathbf{b}+\beta'\mathbf{a}-\mathbf{a}'\times\mathbf{b}'\\
 \beta\mathbf{a}' +\alpha'\mathbf{b}'+\mathbf{a}\times\mathbf{b} & \mathbf{a}'\cdot\mathbf{b}+ \alpha'\beta'
\end{pmatrix}.
\end{equation}
In these terms the quadratic norm $n \maps \O' \to \R$ resembles a determinant:
\[    n\begin{pmatrix} \alpha & \mathbf{a}\\ \mathbf{a}' & \alpha' \end{pmatrix} = \alpha \alpha' - \mathbf{a} \cdot \mathbf{a'}.\]
This norm is invariant under all automorphisms of $\O'$.

One can easily generalize Zorn's construction to other fields $k$, giving what is called a \define{split octonion algebra} over $k$.  We mention this merely because it lets us cover two examples at once: $k = \R$ and $k = \C$.  To bring Zorn's construction up to date, let us define it starting with any 3-dimensional vector space $V$ equipped with suitable structure.  Zorn's product uses the dot product and cross product, so from our experience with the quaternions and octonions in Section \ref{sec:octonions}, it is natural to guess that we need to equip $V$ with a nondegenerate bilinear form 
\[    (-,-) \maps V \times V \to k \]
and a normalized volume form: that is, an element $\omega \in \Lambda^3 V^\ast$ with $(\omega,\omega) = 1$.  However, in 1993 Petersson \cite[Sec.\ 3]{Pe93} carefully studied Zorn's product and noticed that the bilinear form $(-,-)$ is not required to define it!   All we need on $V$ is a volume form.

This is a good excuse to review some of the geometric algebra that can be done on a vector space equipped with only a volume form, which Barnabei, Brini and Rota called a \define{Peano space} \cite{BBR}.  This theory is not limited to three dimensions.  So, for a moment suppose $V$ is an $n$-dimensional vector space equipped with a volume form, meaning a nonzero element $\omega \in \Lambda^n V^\ast$.   The exterior algebra on $V$ is an associative graded algebra with the usual exterior product
\[   \wedge \maps \Lambda^i V \times \Lambda^j V \to \Lambda^{i+j} V .\]
Using the volume form $\omega$ we can make $\Lambda V$ into an associative algebra in another way, using what Grassmann called the \define{regressive product} 
\[   \vee \maps \Lambda^{n-i} V \times \Lambda^{n-j} V \to \Lambda^{n-(i+j)} V .\]
The point is that the volume form gives rise to a linear bijection $\star \maps \Lambda V \to \Lambda V^\ast$ that flips the grading upside down:
\[     \star \maps \Lambda^i V \iso \Lambda^{n-i} V^\ast \]
and lets us define the regressive product by 
\[     \mu \vee \nu =  \star^{-1}  (\star \mu \wedge \star \nu) \]
where the exterior product here is that on $\Lambda V^\ast$.

How is $\star$ defined?   First, there is a nondegenerate bilinear pairing
\[  \lbrace -,-\rbrace \maps \Lambda^i V^\ast \times \Lambda^i V \to k \]
defined by requiring that
\[   \lbrace f_1 \wedge \cdots \wedge f_i,\, \mathbf{a}_1 \wedge \cdots \wedge \mathbf{a}_i \rbrace = f_1(\mathbf{a}_1) \cdots f_i(\mathbf{a}_i) \]
for all $f_1, \dots, f_i \in V^\ast$ and $\mathbf{a}_1, \dots, \mathbf{a}_i \in V$.  Then $\star$ is determined by the equation
\[   \lbrace \star(\mathbf{a}_1 \wedge \cdots \wedge \mathbf{a}_i), \,\mathbf{a}_{i+1} \wedge \cdots \wedge \mathbf{a}_n \rbrace = 
\omega(\mathbf{a}_1 \wedge \cdots \wedge \mathbf{a}_n)\]
for all $\mathbf{a}_1, \dots, \mathbf{a}_n \in V$.

With this technology in hand, let us return to consider a 3-dimensional vector space $V$ with a volume form $\omega$.   We now regard a Zorn matrix
\[ \begin{pmatrix} \alpha & \mathbf{a}\\ \mathbf{a}' & \alpha' \end{pmatrix}
\]
as a notation for an arbitrary element 
\[   (\alpha, \mathbf{a}, \mathbf{a'} , \alpha') \in \Lambda^0 V \oplus \Lambda^1 V \oplus \Lambda^2 V \oplus \Lambda^3 V  = \Lambda V \]
of the exterior algebra of $V$.  Then we define the Zorn product on $\Lambda V$ by expressing Equation \eqref{eq:zorn_product} in terms of the exterior product and regressive product:
\[
\begin{pmatrix} \alpha & \mathbf{a} \\ \mathbf{a}' & \alpha' \end{pmatrix}
\begin{pmatrix} \beta & \mathbf{b} \\ \mathbf{b}' & \beta' \end{pmatrix}
=
\begin{pmatrix}
\alpha\wedge\beta + \mathbf{a}\vee\mathbf{b}'
&
\alpha\wedge\mathbf{b} + \beta'\vee\mathbf{a} - \mathbf{a}'\vee\mathbf{b}'
\\[6pt]
\alpha'\vee\mathbf{b}' + \beta\wedge\mathbf{a}' + \mathbf{a}\wedge\mathbf{b}
&
\alpha'\vee\beta' + \mathbf{a}'\wedge \mathbf{b}
\end{pmatrix}
\]
While this does little to clarify the precise formula for the Zorn product, it shows that this product relies solely on the 3-dimensional space $V$ and its volume form.  

Let $Z(V,\omega)$ denote $\Lambda V$ made into an octonion algebra with its Zorn product, which depends on $\omega \in \Lambda^3 V^\ast$.   Let $e \in Z(V,\omega)$ denote the identity for the Zorn product, to distinguish it from the identity $1$ for the exterior product.  Since $\Lambda^3 V$ is one-dimensional, there is a unique \define{covolume form} $\pi \in \Lambda^3 V$ with $\lbrace \omega, \pi \rbrace = 1$.  From looking at the Zorn matrices we see $e = 1 + \pi$.  It is easy to check that $1$ and $\pi$ are nontrivial idempotents in $Z(V,\omega)$, meaning idempotents for the Zorn product that equal neither $0$ nor $e$.

\begin{theorem}
\label{thm:split_octonion_functor}
For any field $k$, there is a functor $Z$ from the category of 
\begin{itemize}
\item 3-dimensional vector spaces with volume form, and linear operators preserving the volume form
\end{itemize}
to the category of
\begin{itemize}
\item algebras isomorphic to the split octonion algebra over $k$, equipped with chosen nontrivial idempotent, and algebra isomorphisms preserving this chosen idempotent
\end{itemize}
sending $(V,\omega)$ to the Zorn algebra $Z(V,\omega)$ equipped with the nontrivial idempotent $\pi$.   This functor $Z$ is an equivalence of categories.
\end{theorem}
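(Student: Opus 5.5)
The proof proceeds in the same pattern as Thm.\ \ref{thm:quaternion_functor} and Thm.\ \ref{thm:octonion_functor}. There are three steps: functoriality, essential surjectivity, and full faithfulness.

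\emph{Functoriality.} A linear map $f \maps V \to V'$ with $f^\ast \omega' = \omega$ induces a graded isomorphism $\Lambda f \maps \Lambda V \to \Lambda V'$ that preserves the exterior product. Because the pairing $\lbrace -,-\rbrace$ is natural and $\star$ is defined from $\omega$, we get $\star' \circ \Lambda f = (\Lambda f^{-1})^\ast \circ \star$. Hence $\Lambda f$ also preserves the regressive product. Since the Zorn product is written entirely in terms of $\wedge$ and $\vee$, $\Lambda f$ is an algebra isomorphism $Z(V,\omega) \to Z(V',\omega')$. It sends the covolume $\pi$ to $\pi'$, because $\lbrace \omega', \Lambda f(\pi)\rbrace = \lbrace \omega, \pi\rbrace = 1$. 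Compatibility with composition is clear. That $Z(V,\omega)$ is a split octonion algebra follows by choosing a basis with $\omega(e_1,e_2,e_3)=1$: this identifies the product with Zorn's formula \eqref{eq:zorn_product}.

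\emph{Essential surjectivity.} Let $A$ be a split octonion algebra with a nontrivial idempotent $p$, and set $q = e - p$. We use the Peirce decomposition $A = A_{11} \oplus A_{12} \oplus A_{21} \oplus A_{22}$ relative to $p$. The standard facts are: $N(p)=0$, $\tr(p)=1$, $A_{11} = kq$ and $A_{22} = kp$ (with the appropriate labeling), $A_{12}$ and $A_{21}$ are 3-dimensional, and
\[ A_{12}A_{12} \subseteq A_{21}, \qquad A_{21}A_{21} \subseteq A_{12}, \qquad A_{12}A_{21} \subseteq A_{11}. \]
These follow from $N$ being multiplicative, the quadratic identity $x^2 - \tr(x)x + N(x)e = 0$, and alternativity. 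Take $V = A_{12}$. The bilinear pairing $A_{12} \times A_{21} \to k$ given by the product, read off as a multiple of $q$, is nondegenerate because $N$ is nondegenerate. This identifies $A_{21} \cong V^\ast \cong \Lambda^2 V$ once a volume form is fixed. Define
\[ \omega(a,b,c) = \langle ab, c\rangle , \]
meaning the coefficient of $q$ in $(ab)c$. One must show $\omega$ is alternating and nonzero, and then that the resulting isomorphism $A \cong Z(V,\omega)$, sending $q \mapsto 1$ and $p \mapsto \pi$, respects products.

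The alternating property comes from $a^2 = 0$ for $a \in A_{12}$ together with a skew-adjointness argument like the one in Thm.\ \ref{thm:octonion_functor}. Nondegeneracy follows by comparing with the Zorn matrix model, in which every idempotent of trace 1 is conjugate to $\mathrm{diag}(1,0)$.

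\emph{Full faithfulness.} An isomorphism $Z(V,\omega) \to Z(V',\omega')$ that fixes the chosen idempotents preserves the Peirce spaces. Its restriction to $\Lambda^1$ is therefore a linear map $V \to V'$, and it preserves $\omega$ because $\omega$ is recovered from the product as above. This shows fullness. Faithfulness is immediate, since $\Lambda f$ determines $f$.

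The main obstacle is essential surjectivity, in particular checking that the volume form defined on $A_{12}$ reproduces exactly the cross products and the sign $-\mathbf{a}'\times\mathbf{b}'$ in \eqref{eq:zorn_product}. To avoid heavy identity-chasing, I would first show that $\Aut(A)$ acts transitively on nontrivial idempotents. Since $A$ is split, $A \cong Z(k^3,\omega_0)$, and a nontrivial idempotent is a trace-1 element of norm 0, so it is conjugate to $\pi$. This reduces the problem to the model $Z(k^3,\omega_0)$, where the required identities are direct computations. The transitivity claim is where the field $k$ enters; I expect to cite the standard result from \cite{Pe93}.
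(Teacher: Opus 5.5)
Your strategy is the paper's. Everything hinges on $\Aut(A)$ acting transitively on nontrivial idempotents, after which the rest is computation in the Zorn model with $V$ the Peirce space $qAp$ (the paper's $A_1$). The paper cites \cite[Lemma 5.5]{KS} for transitivity rather than Petersson. Note that ``trace $1$, norm $0$'' only characterizes nontrivial idempotents and does not by itself give conjugacy. Organizing the proof as ``fully faithful plus essentially surjective'', instead of building the quasi-inverse and natural isomorphism $\Phi$ as the paper does, is legitimate and slightly more economical. Given transitivity, essential surjectivity is immediate, so the intrinsic recovery of $\omega$ is needed only for fullness. That fullness step has two defects as written.

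\emph{The $\omega$ formula.} Your $\omega$ vanishes identically. For $a,b,c\in A_{12}$ you have $ab\in A_{21}$, so $(ab)c\in A_{21}A_{12}\subseteq A_{22}=kp$, and its $q$-coefficient is $0$. In $Z(V,\omega)$ one has $ab=a\wedge b$ and $(ab)c=a\wedge b\wedge c=\omega(a,b,c)\,\pi$. So take the $p$-coefficient of $(ab)c$ (the paper's $[(ab)c]_3$), or equivalently the $q$-coefficient of $c(ab)$; these agree since $\tr(xy)=\tr(yx)$ and $\tr(p)=\tr(q)=1$. With that fix, an idempotent-preserving isomorphism $g$ does give $f^\ast\omega'=\omega$ for $f=g|_V$. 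With your literal formula this step proves nothing.

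\emph{Fullness.} You must show $g=Z(f)$, not merely that $f$ preserves the volume form. This holds because $g$ fixes $e$ and $\pi$, hence $1=e-\pi$, so it agrees with $Z(f)$ on $\Lambda^0V\oplus\Lambda^3V$. Also $\Lambda^2V$ is spanned by products $ab=a\wedge b$ with $a,b\in V$, so $g(ab)=f(a)f(b)=Z(f)(ab)$. Both repairs are easy, but the first is needed for the argument to be correct as written.
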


\begin{proof}
If $(V,\omega)$ and $(V', \omega')$ are 3-dimensional vector spaces with volume form then any linear map $f \maps V \to V'$ induces linear maps $f_\ast \maps \Lambda V \to \Lambda V'$ and $f^\ast \maps \Lambda V'^\ast \to \Lambda V^\ast$.  We say $f$ preserves the volume form if $f^\ast \omega' = \omega$.  In this case $f$ must be a bijection, and one can check that $f_\ast \maps Z(V,\omega) \to Z(V',\omega')$ is an algebra isomorphism sending the covolume form $\pi$ of $Z(V,\omega)$ to the covolume form $\pi'$ of $Z(V,\omega')$.

To check that $Z$ is an equivalence, we must first recover a 3-dimensional vector space $V$ with volume form $\omega$ from an algebra $A$ isomorphic to the split octonions equipped with a nontrivial idempotent, and then find a natural isomorphism $A \cong Z(V,\omega)$. 

By the structure theory of split octonion algebras, all nontrivial idempotents of $A$ lie in the same orbit under $\Aut(A)$ \cite[Lemma 5.5]{KS}.  Thus if $w$ denotes the standard volume form on $k^3$, we can choose an isomorphism $\alpha \maps A \iso Z(k^3,w)$ such that $\alpha$ maps $\pi$ to the standard covolume form $p$ on $k^3$.   As vector spaces we have $ Z(k^3,w) = \Lambda k^3$, and a Zorn product calculation shows that the grading on the exterior algebra can be described as follows:
\[  \begin{array}{ccl}
\Lambda^0 k^3 &=& \{a \in Z(k^3,\omega) \, \vert \; p a = 0, \; a p = 0 \} \\ [3pt]
\Lambda^1 k^3 &=& \{a \in Z(k^3,\omega) \, \vert \; p a = 0, \; a p = a \} \\ [3pt]
\Lambda^2 k^3 &=& \{a \in Z(k^3,\omega) \, \vert \; p a = a, \; a p = 0 \} \\ [3pt]
\Lambda^3 k^3 &=& \{a \in Z(k^3,\omega) \, \vert \; p a = a, \; a p = a \}. 
\end{array}
\]
Pulling back this grading along $\alpha$ we get a grading of $A$
(as a vector space, not an algebra), and we take $V$ to be its degree-1 part:
\[   V = A_1 = \{a \in A \, \vert \; \pi a = 0, \; a \pi = a \} .\]
Another Zorn product calculation shows that the standard volume form $w$ on $k^3$ pulls back along $\alpha$ to the volume form 
\[   \omega \maps V \times V \times V \to k \]
such that $\omega(a,b,c)$ is the degree-3 part of $(ab)c$:
\[
\omega(a,b,c) = \big[(ab)c\big]_3, \qquad a,b,c \in V,
\]
where $[\,\cdot\,]_3$ denotes the degree-$3$ part. 

We now must produce a natural isomorphism $A \cong Z(V,\omega)$. The idea is that the grading and the products among its pieces reconstruct the entire algebra intrinsically. Because $\pi$ is an idempotent, the four subspaces $A_0, A_1, A_2, A_3 \subseteq A$ are the joint eigenspaces of left and right multiplication by $\pi$, exactly as in the displayed characterization.  A Zorn product calculation in $Z(k^3,w)$, transported to $A$ by $\alpha$, shows that multiplication respects this grading in the sense that
\[
A_i \cdot A_j \ \subseteq\ A_{i+j} \ \oplus\ A_{\,i+j-3}
\]
where $A_m := 0$ for $m < 0$ or $m > 3$. In particular:
\begin{itemize}
\item $A_0$ and $A_3$ are each one-dimensional, and multiplication by their generators recovers the two scalar actions; indeed $A_0 = k\hspace{0.5pt}1$ and $A_3 = k\hspace{0.5pt}\pi$, giving canonical isomorphisms $\Lambda^0 V \iso A_0$ and $\Lambda^3 V \iso A_3$, the latter sending the covolume form to $\pi$.
\item For $a, b \in A_1 = V$, the product $ab$ lies in $A_0 \oplus A_2$, and its degree-$2$ component defines a surjective alternating bilinear map $V \times V \to A_2$, $(a,b) \mapsto [ab]_2$, which is the wedge product; since $\dim V = 3$, this induces a canonical isomorphism $\Lambda^2 V \iso A_2$.
\item Likewise $[(ab)c]_3 = \omega(a,b,c)$ defines a surjective alternating trilinear map $V \times V \times V \to A_3$, compatibly identifying $\Lambda^3 V \iso A_3$ and matching $\omega$ with the covolume pairing.
\end{itemize}
Assembling these with the tautological identification $\Lambda^1 V = V = A_1$, we obtain a canonical linear isomorphism
\[
\Phi \maps \Lambda V = \Lambda^0 V \oplus \Lambda^1 V \oplus \Lambda^2 V \oplus \Lambda^3 V \ \iso\ A_0 \oplus A_1 \oplus A_2 \oplus A_3 = A,
\]
built entirely from the multiplication of $A$ and the idempotent $\pi$.

It is immediate from the construction that $\Phi \maps Z(V,\omega) \to A$ is an algebra homomorphism.  Finally, we need to check that $\Phi$ is natural. The subspace $V = A_1$, the volume form $\omega$, and the identifications of $A_0, A_2, A_3$ with $\Lambda^0 V, \Lambda^2 V, \Lambda^3 V$ were all defined solely in terms of the product on $A$ and the chosen idempotent $\pi$. Hence any morphism in the target category---an algebra isomorphism $g \maps A \to A'$ with $g(\pi) = \pi'$---carries $A_1$ to $A_1'$ by a volume-form--preserving linear map $g_1 \maps (V,\omega) \to (V',\omega')$, and satisfies $\Phi' \circ (g_1)_\ast = g \circ \Phi$. Thus $\Phi$ is a natural isomorphism.
\end{proof} 

Note that over a field of characteristic $\ne 2$, nontrivial idempotents correspond bijectively to nontrivial square roots of one, meaning elements $j \ne \pm e$ with $j^2 = e$: any such $j$ gives a nontrivial idempotent $\pi = (e + j)/2$, and conversely any nontrivial idempotent $\pi$ gives a nontrivial square root of one, namely $j = 2\pi - e$.  So, except in characteristic $2$ we could restate Theorem \ref{thm:split_octonion_functor} in terms of square roots of one.

This theorem has implications both for the split octonion algebra over $\R$, which we are calling ``the'' split octonions $\O'$, and also for the split octonion algebra over $\C$, which we call the bioctonions $\O_\C$.  We postpone the latter to the next section, but here is a consequence for $\O'$:

\begin{corollary}
For any 3-dimensional manifold $M$ equipped with a nowhere vanishing 3-form $\omega$, there is a bundle of algebras $Z(TM,\omega)$ whose fibers are algebras isomorphic to the split octonions, obtained by applying the functor $Z$ of Thm.\ \ref{thm:split_octonion_functor} to each fiber $T_pM$ of the tangent bundle $TM$, equipped with its volume form $\omega_p$.
\end{corollary}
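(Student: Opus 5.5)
This follows from Theorem~\ref{thm:split_octonion_functor}, applied over $k = \R$ one tangent space at a time. The only real content is checking that everything varies smoothly in $p$, so I would organize the proof around that.

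First, the fiber. For each $p \in M$ the pair $(T_pM, \omega_p)$ is a 3-dimensional real vector space with volume form, since $\omega$ vanishes nowhere. The Zorn algebra $Z(T_pM,\omega_p)$ therefore exists. By Theorem~\ref{thm:split_octonion_functor} it is an algebra isomorphic to the split octonion algebra over $\R$, which is $\O'$, and it comes with the nontrivial idempotent $\pi_p$.

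Second, the underlying vector bundle. As a vector space, $Z(T_pM,\omega_p)$ is just $\Lambda T_pM$, so I take the underlying bundle to be the exterior algebra bundle
\[ \Lambda TM = \Lambda^0 TM \oplus TM \oplus \Lambda^2 TM \oplus \Lambda^3 TM , \]
which is a smooth rank-8 vector bundle. The multiplication is the Zorn product, built from three ingredients: the wedge product, the regressive product $\vee$, and the map $\star$ determined by $\omega$. The wedge product is a smooth bundle map. The pairing $\{-,-\}$ is also smooth, and so is $\star \maps \Lambda^i TM \to \Lambda^{3-i}T^\ast M$, because $\omega$ is a smooth section. Since $\omega$ vanishes nowhere, $\star$ is fiberwise invertible, so $\star^{-1}$ is smooth as well. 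Hence $\vee$, and with it the whole Zorn product, is a smooth bilinear bundle map $\Lambda TM \times \Lambda TM \to \Lambda TM$. The unit $e = 1 + \pi$ is also smooth: the covolume $\pi$ is characterized by $\{\omega,\pi\}=1$, so it is the dual section of the line bundle $\Lambda^3 TM$.

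Third, local triviality as a bundle of algebras. Near any point, choose a local frame and rescale it so that $\omega$ takes the value $1$ on it; this is possible because $\omega$ is nonvanishing. The frame then gives, at each point, a volume-preserving isomorphism $(\R^3,w) \to (T_pM,\omega_p)$. Functoriality of $Z$ turns this into a local trivialization by algebra isomorphisms $U \times Z(\R^3,w) \cong Z(TM,\omega)|_U$. Each fiber is thus identified with one fixed copy of $\O'$.

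The main obstacle, mild as it is, is this last step. It needs the observation that an arbitrary local frame can be normalized against $\omega$, so that functoriality yields smooth transition functions valued in $\Aut(\O')$ (indeed in $\SL(3,\R) \subset \Aut(\O')$). Once that is in place, the corollary follows.
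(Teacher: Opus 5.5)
Your proposal is correct and takes the same route as the paper. The paper gives no separate argument for this corollary beyond applying the functor $Z$ of Theorem~\ref{thm:split_octonion_functor} to each $(T_pM,\omega_p)$. Your extra checks---smoothness of the Zorn product on $\Lambda TM$, and local triviality via $\omega$-normalized frames with $\SL(3,\R)$-valued transition functions---spell out the bundle-theoretic details that the paper leaves implicit.
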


\section{The bioctonions}

Because the complex numbers are algebraically closed, there is up to isomorphism one octonion algebra over $\C$, which we call the \define{bioctonions} and denote as $\O_\C$.  We can obtain this by complexifying the octonions, so
\[  \O_\C \cong \C \otimes_\mathbb{R} \O \]
We can also obtain the bioctonions by complexifying the split octonions:
\[  \O_\C \cong \C \otimes_\mathbb{R} \O' .\]
The Zorn matrix picture from the last section gives an isomorphism
\[  \O'_\C \cong Z(\C^3, \omega) \]
where $w$ is the standard complex volume form on $\C^3$.   Thus, for computational purposes we take the following as our definition of bioctonions:
\[
\O_\C=
\left\{  \begin{pmatrix} \alpha & \mathbf{a}\\ \mathbf{a}' & \alpha' \end{pmatrix} \; \Big{\vert} \; \alpha, \alpha' \in \C, \; \mathbf{a}, \mathbf{a}' \in \C^3 \right\}
\]
with the Zorn matrix product as its multiplication:
\[ \begin{pmatrix} \alpha & \mathbf{a}\\ \mathbf{a}' & \alpha' \end{pmatrix} 
\begin{pmatrix} \beta & \mathbf{b}\\ \mathbf{b}' & \beta' \end{pmatrix}
=
\begin{pmatrix}
 \alpha\beta+ \mathbf{a}\cdot\mathbf{b}' & \alpha\mathbf{b}+\beta'\mathbf{a}-\mathbf{a}'\times\mathbf{b}'\\
 \beta\mathbf{a}' +\alpha'\mathbf{b}'+\mathbf{a}\times\mathbf{b} & \mathbf{a}'\cdot\mathbf{b}+ \alpha'\beta'
\end{pmatrix},
\]
where $\times \maps \C^3\times \C^3\to \C^3$ is the standard vector product on $\C^3$ and $\cdot \maps \C^3 \times \C^3 \to \C^3$ is the standard complex-bilinear dot product.   
The bioctonions have quadratic norm $n$ and trace $\tr$ given by
\begin{equation}
\label{eq:bioctonion_norm_and_trace}
n\begin{pmatrix} \alpha & \mathbf{a}\\ \mathbf{a}' & \alpha' \end{pmatrix}=
\alpha\alpha'-\mathbf{a}\cdot\mathbf{a}', \quad
\tr\begin{pmatrix} \alpha & \mathbf{a}\\ \mathbf{a}' & \alpha' \end{pmatrix}=
\alpha+\alpha'.
\end{equation}
The \define{conjugate} of an element of $\O_\C$ is obtained by swapping the diagonal entries and negating the off-diagonal entries:
\begin{equation}
\label{eq:bioctonion_conjugate}
\begin{pmatrix} \alpha & \mathbf{a}\\ \mathbf{a}' & \alpha' \end{pmatrix}^{\!\ast}
=
\begin{pmatrix} \alpha' & -\mathbf{a}\\ -\mathbf{a}' & \alpha \end{pmatrix} .
\end{equation}
Thus $x + x^\ast = 2 \tr(x)\,1$ and $x\,x^\ast = x^\ast x = n(x)\,1$, and $\ast$ is a complex-linear involution with $(xy)^\ast = y^\ast x^\ast$.

Our main application of this picture of the bioctonions will be to the complex Albert algebra $\h_3(\O_\C)$ in Section \ref{sec:complex_Albert_algebra}.  For now we just state some easy spinoffs of Thm.\ \ref{thm:split_octonion_functor}.  For the first, let us define a \define{nontrivial} square root of $-1$ in $\O_\C$ to be one other than $\pm i$ in the central copy of $\C$ in $\C \otimes_\R \O = \O_\C$.

\begin{theorem}
\label{thm:bioctonion_functor}
There is a functor $Z$ from the category of 
\begin{itemize}
\item 3-dimensional complex vector spaces with complex volume form, and linear operators preserving the complex volume form
\end{itemize}
to the category of
\begin{itemize}
\item algebras isomorphic to the bioctonions equipped with chosen nontrivial square root of $-1$, and algebra isomorphisms preserving this chosen square root of $-1$.
\end{itemize}
This functor $Z$ is an equivalence of categories.
\end{theorem}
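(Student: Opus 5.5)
The plan is to deduce this from Theorem \ref{thm:split_octonion_functor} with $k = \C$, by translating nontrivial idempotents into nontrivial square roots of $-1$.

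First, define $Z$ on objects as the composite of the functor of Theorem \ref{thm:split_octonion_functor} with a translation step. Given $(V,\omega)$, take the Zorn algebra $Z(V,\omega)$, which is a split octonion algebra over $\C$ and hence isomorphic to $\O_\C$. Equip it with the element $j = i(2\pi - e)$. Since $\pi$ is a nontrivial idempotent, $2\pi - e$ is a nontrivial square root of $e$, so $j^2 = -e$. Also $j \neq \pm i e$, because $\pi \neq 0, e$.

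On morphisms, $Z(f) = f_\ast$. This is an algebra isomorphism sending $\pi$ to $\pi'$, as in the proof of Theorem \ref{thm:split_octonion_functor}. It is complex-linear, so it sends $j$ to $j'$.

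Second, show that the translation step is itself an equivalence (in fact an isomorphism) of categories. The two categories are: algebras isomorphic to $\O_\C$ with a chosen nontrivial idempotent, and the same algebras with a chosen nontrivial square root of $-1$. The key point is the bijection
\[ \pi \mapsto j = i(2\pi - e), \qquad j \mapsto \pi = \tfrac{1}{2}(e - ij). \]
Check directly: if $j^2 = -e$ then $(-ij)^2 = e$, so $\pi = \tfrac12(e - ij)$ is idempotent, and it is nontrivial exactly when $-ij \neq \pm e$, i.e. $j \neq \pm ie$.

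One point needs care. ``Nontrivial'' in the statement means different from $\pm i$ in the central copy of $\C$ in $\C \otimes_\R \O$. That central copy is just $\C e$, the scalars of the complex algebra, and it is preserved by any isomorphism, so this definition agrees with $j \neq \pm ie$.

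Both directions of the bijection are given by fixed polynomial expressions in $e$ and complex scalars. Hence a complex-algebra isomorphism preserves the chosen idempotent if and only if it preserves the corresponding square root of $-1$. So the translation is an isomorphism of categories, commuting with the identity on morphisms.

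Third, compose. The functor $Z$ of the statement is the functor of Theorem \ref{thm:split_octonion_functor} (with $k=\C$, a field of characteristic $0$) followed by this isomorphism of categories. A composite of an equivalence with an isomorphism is an equivalence, which proves the theorem. The same natural isomorphism $\Phi$ built there serves here.

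The main obstacle is the morphism-level subtlety. The target category says ``algebra isomorphisms'' without specifying complex-linearity. A merely real-linear automorphism of $\O_\C$ could act on the center $\C e$ by complex conjugation and send $i e \mapsto -ie$, spoiling the correspondence $j \leftrightarrow \pi$. I would resolve this by reading the target category as complex algebras with $\C$-linear isomorphisms, consistent with Theorem \ref{thm:split_octonion_functor} over $k=\C$. I would state this explicitly, noting that this reading is what makes ``the central copy of $\C$'' and $\pm i$ canonical.
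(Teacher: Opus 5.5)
Your proposal is correct and takes the same route as the paper: apply Theorem \ref{thm:split_octonion_functor} with $k=\C$, then transport along the bijection between nontrivial idempotents and nontrivial square roots of $-1$. The paper writes this as $i(1-2p)$ rather than your $i(2\pi - e)$, which is an immaterial sign difference. Your explicit check on morphisms, and your observation that algebra isomorphisms must be read as $\C$-linear, fill in details the paper leaves implicit.
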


\begin{proof} 
Thm.\ \ref{thm:split_octonion_functor} yields this result for complex algebras isomorphic to the bioctonions equipped with a nontrivial idempotent, but from a nontrivial idempotent $p$ in such an algebra we can construct a nontrivial square root of $-1$, namely $i(1 - 2p)$, and vice versa.  
\end{proof}

Following the by now familiar pattern, this can be applied to construct bioctonion bundles.  Recall from the discussion before Corollary \ref{cor:SU(3)_structure} that an almost complex structure on a 6-dimensional manifold $M$ allows us to define a complex line bundle $\Lambda^{3,0} T^\ast M$ whose sections are complex differential forms of type $(3,0)$.   A nowhere vanishing section of this bundle is called a \define{complex volume form} on $M$.

\begin{corollary}
For any 6-dimensional manifold $M$ equipped with an almost complex structure and complex volume form $\omega$, there is a bundle of algebras $Z(TM,\omega)$ whose fibers are algebras isomorphic to the bioctonions, obtained by applying the functor $Z$ of Thm.\ \ref{thm:split_octonion_functor} to each complex vector space $T_pM$ equipped with its complex volume form $\omega_p$.
\end{corollary}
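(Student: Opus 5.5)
The statement is the Corollary, so the plan is to apply the functor $Z$ of Thm.~\ref{thm:split_octonion_functor} (with $k=\C$) fiberwise to $TM$, regarded as a complex vector bundle via the almost complex structure. The proof will check three things in turn.

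First, fix $p\in M$. The almost complex structure $J_p$ makes $T_pM$ a 3-dimensional complex vector space. The form $\omega_p$ is a nonzero section of $\Lambda^{3,0}T_p^\ast M$, which I identify with the complex-linear top exterior power $\Lambda^3_\C (T_pM)^\ast$. Under this identification $\omega_p$ is a complex volume form in the sense of Thm.~\ref{thm:split_octonion_functor}. That theorem then gives an algebra $Z(T_pM,\omega_p)$. To see that it is isomorphic to the bioctonions, choose a complex basis of $T_pM$ on which $\omega_p$ evaluates to $1$. This basis gives a volume-preserving isomorphism $(\C^3,w)\iso(T_pM,\omega_p)$. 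Applying $Z$ to it yields an algebra isomorphism $\O_\C = Z(\C^3,w)\cong Z(T_pM,\omega_p)$.

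Second, the fibers must assemble into a smooth vector bundle. As a complex vector space, $Z(T_pM,\omega_p)=\Lambda_\C T_pM$, the complex exterior algebra. So the underlying bundle is $\Lambda_\C TM=\bigoplus_{i=0}^3\Lambda^i_\C TM$, where each summand is a smooth vector bundle built from the complex bundle $(TM,J)$.

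Third, the fiberwise multiplication must be a smooth bundle map $Z\otimes Z\to Z$. The Zorn product is built from the exterior product, which is smooth, and the regressive product. The regressive product is defined using $\star$, which is obtained from $\omega_p$ and the canonical pairings by linear algebra, so it depends smoothly on $p$ because $\omega$ is a smooth section. The identity $e=1+\pi$ also varies smoothly, since the covolume $\pi_p$ is determined by $\{\omega_p,\pi_p\}=1$ and $\omega_p$ never vanishes.

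I expect the only real obstacle to be this smoothness of the product. To settle it, I will work locally: choose smooth local complex frames of $TM$, which exist because $TM$ is a complex vector bundle, and rescale them so that $\omega$ equals $1$ on each frame. In such a frame the algebra is literally the constant Zorn matrix algebra. On overlaps the transition functions are smooth maps into the group of volume-preserving complex linear maps, and $Z$ carries these to algebra automorphisms by functoriality. This exhibits $Z(TM,\omega)$ as a locally trivial bundle of algebras with fiber $\O_\C$.
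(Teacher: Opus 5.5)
Your proposal is correct and takes the paper's approach: the paper gives no separate argument, treating the corollary as an immediate fiberwise application of the functor $Z$ of Thm.~\ref{thm:split_octonion_functor} with $k=\C$ to each $(T_pM,\omega_p)$. Your normalized local frames, whose transition functions lie in $\SL(3,\C)$ and are carried by functoriality to algebra automorphisms of the Zorn algebra, simply spell out the local triviality and smoothness that the paper leaves implicit.
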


The most charismatic examples of the above corollary arise from \define{Calabi--Yau three-folds}: 3-dimensional K\"ahler manifolds $M$ for which the canonical bundle, i.e.\ the bundle $\Lambda^{3,0} T^\ast M$, admits a holomorphic trivialization.  When compact, such a manifold admits a Ricci flat K\"ahler structure and a nowhere vanishing holomorphic section $\Omega$ of the canonical bundle whose norm is constant with respect to that metric.  In this case the construction above can be done in the holomorphic category.

\begin{corollary}
For any compact Calabi--Yau threefold $M$ equipped with a nonzero holomorphic section $\Omega$ of the canonical bundle of constant norm, there is a holomorphic bundle of algebras $Z(TM,\Omega)$ whose fibers are algebras isomorphic to the bioctonions.
\end{corollary}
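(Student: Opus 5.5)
The plan is to obtain the statement from the preceding corollary, applied fiberwise, and then upgrade smoothness to holomorphicity using the holomorphic structure of $TM$ and of $\Omega$. A compact Calabi--Yau threefold $M$ is in particular a 6-dimensional manifold with an (integrable) almost complex structure. The nonzero holomorphic section $\Omega$ of the canonical bundle $\Lambda^{3,0}T^\ast M$ must in fact be nowhere vanishing. Indeed, its norm is constant and nonzero, so $\Omega_p \neq 0$ for all $p$. Thus $\Omega$ is a complex volume form, and at each point $p$ the pair $(T_pM,\Omega_p)$ is a 3-dimensional complex vector space with complex volume form. Here $T_pM$ means the holomorphic tangent space $T^{1,0}_pM$, identified with the real tangent space carrying $J$. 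The previous corollary then already gives a bundle of algebras $Z(TM,\Omega)$ whose fiber at $p$ is $Z(T_pM,\Omega_p)\cong \O_\C$ by Thm.~\ref{thm:bioctonion_functor}.

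The remaining work is to see that this bundle and its multiplication are holomorphic. As a vector bundle, $Z(TM,\Omega)=\Lambda T^{1,0}M = \underline{\C}\oplus T^{1,0}M\oplus \Lambda^2T^{1,0}M\oplus \Lambda^3T^{1,0}M$. Each summand is a holomorphic vector bundle, since exterior powers of the holomorphic tangent bundle are holomorphic. The Zorn product is built from two ingredients. The first is the exterior product, which is a holomorphic bundle map because it is defined without reference to $\Omega$. The second is the regressive product $\mu\vee\nu = \star^{-1}(\star\mu\wedge\star\nu)$. So I will check that $\star\maps \Lambda^iTM\to\Lambda^{3-i}T^\ast M$ is a holomorphic bundle isomorphism. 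By its defining equation $\lbrace \star(a_1\wedge\cdots\wedge a_i),a_{i+1}\wedge\cdots\wedge a_3\rbrace=\Omega(a_1,\ldots,a_3)$, $\star$ is contraction with $\Omega$. Contraction of holomorphic sections with a holomorphic form is holomorphic, and $\star$ is fiberwise invertible because $\Omega_p\neq 0$, so its inverse is holomorphic too. The unit $e=1+\pi$ involves the covolume $\pi$ with $\lbrace\Omega,\pi\rbrace=1$, which is $\Omega^{-1}$ in the dual line bundle $\Lambda^3T^{1,0}M$ and hence holomorphic. Therefore the multiplication map $Z\otimes Z\to Z$ and the unit are holomorphic sections, and $Z(TM,\Omega)$ is a holomorphic bundle of algebras.

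The main point requiring care is the nonvanishing of $\Omega$; everything else is formal. The constant-norm hypothesis handles it directly. On a compact Calabi--Yau, any nonzero holomorphic section of a holomorphically trivial canonical bundle is a constant multiple of a trivializing one, so it vanishes nowhere in any case. I would mention this fact but rely on the constant-norm hypothesis. A minor bookkeeping issue is identifying the real tangent bundle with $T^{1,0}M$ so that the corollary's $TM$ is interpreted as a holomorphic bundle; I would state this identification explicitly at the outset. The metric plays no role in the construction beyond guaranteeing $\Omega\neq0$, consistent with Thm.~\ref{thm:split_octonion_functor} needing only a volume form.
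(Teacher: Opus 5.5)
Your proposal is correct and follows the paper's route. The paper justifies this corollary only by remarking that the fiberwise Zorn construction of the preceding corollary ``can be done in the holomorphic category'' once $\Omega$ is nowhere vanishing. You supply the details it leaves implicit: nonvanishing from the constant-norm hypothesis, holomorphicity of $\star$ as contraction with $\Omega$ (and hence of the regressive product), and holomorphicity of the unit via $\pi = \Omega^{-1}$.
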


Because the canonical bundle is trivial and this provides a copy of the algebra $\C$ in each fiber of $Z(TM,\omega)$, this bioctonion bundle comes equipped with a trivial subbundle of subalgebras isomorphic to $\C$.  However, the bioctonion bundle itself is typically nontrivial, since as a vector bundle it is nothing but the bundle of holomorphic differential forms on $M$.

\section{The complex Albert algebra }
\label{sec:complex_Albert_algebra}

Here we recall two constructions of the complex Albert algebra, and then give an explicit isomorphism between them using the Zorn matrix description of the bioctonions. 

In one construction, the complex Albert algebra is the space $\h_3(\O_\C)$ of $3\times 3$ self-adjoint matrices of bioctonions: that is, matrices
\begin{equation}\label{Y form}
Y = \begin{pmatrix} \xi_1 & x_3 & {x_2}^\ast \\ {x_3}^\ast & \xi_2 & x_1 \\ x_2 & {x_1}^\ast & \xi_3 \end{pmatrix}
\end{equation}
with $\xi_1,\xi_2,\xi_3 \in \C$ and $x_1,x_2,x_3 \in \O_\C$.  This algebra is
$27$-dimensional over $\C$, since each of the three octonion entries contributes $8$ dimensions while the diagonal entries contribute $3$.  It is a Jordan algebra with Jordan product
\[   a \circ b = \tfrac{1}{2} (a b + b a) .\]
It has a cubic norm $\operatorname{Det} \maps \h_3(\O_\C) \to \C$ given by 
\begin{equation}
\label{eq:FJ_norm}
\operatorname{Det}(Y) = \xi_1 \xi_2 \xi_3 - \xi_1 n(x_1) - \xi_2 n(x_2) - \xi_3 n(x_3) + \tr((x_1 x_2) x_3),
\end{equation}
where $n$ and $\tr$ are the norm and trace on $\O_\C$ as defined in Equation \ref{eq:bioctonion_norm_and_trace}. Note that
$\operatorname{Det}$ is not the determinant of a matrix over a commutative ring: Equation \eqref{eq:FJ_norm} serves as the definition. The bracketing in $(x_1x_2)x_3$ does not actually matter in this equation, as $\tr((x_1x_2)x_3) = \tr(x_1(x_2x_3))$.

Another approach to the complex Albert algebra uses the ``first Tits construction'' \cite{Tits}, \cite[Sec.\ 5]{McC69}, \cite[Sec.\ 4.5]{McC04}. In its simplest form, this takes a central simple associative algebra $\A$ of degree $3$ and a parameter $\mu$ and produces an Albert algebra $\J$. The associative algebra we use is $\A = \M_3(\C)$, and we take $\mu = 1$.  

On $\A = \M_3(\C)$ we have the determinant $\det \maps \A \to \C$, the trace $\tr \maps \A \to \C$, and the \define{adjugate} map $\sharp \maps \A \to \A$, characterised by
\begin{equation}\label{sharp inverse}
x^{\sharp}x = x x^{\sharp} = \det(x)\,1 .
\end{equation}
The adjugate is widely used in the formula for the inverse of a matrix in terms of cofactors, which amounts to $x^{-1} = x^\sharp \, \det(x)^{-1}$.  For $3 \times 3$ matrices the adjugate $x^\sharp$ is quadratic in $x$.

The Albert algebra provided by the first Tits construction is
\[
\J = \A_0 \oplus \A_1 \oplus \A_2, \qquad \A_i = \A \ \ (i = 0,1,2),
\]
of complex dimension $27$, with basepoint $1_\J = (1, 0, 0)$.  Many of the structures on $\A$ reappear at a higher level on $\J$---ultimately because $\J$ is itself isomorphic to a space of $3 \times 3$ matrices, namely $\h_3(\O_\C)$.  We introduce these by hand: we define a \define{cubic norm} $N \maps \J \to \C$, a \define{trace} $\Tr \maps \J \to \C$, and an \define{adjugate} $x \mapsto x^{\sharp} \in \J$
by
\begin{align}
N(A_0, A_1, A_2) &= \det(A_0) + \det(A_1) + \det(A_2) - \tr(A_0 A_1 A_2),
  \label{eq:Tits_norm}\\
\Tr(A_0, A_1, A_2) &= \tr(A_0), \label{eq:Tits_trace}\\
(A_0, A_1, A_2)^{\sharp} &= \big(A_0^{\sharp} - A_1 A_2,\ \
  A_2^{\sharp} - A_0 A_1,\ \ A_1^{\sharp} - A_2 A_0\big).
  \label{eq:Tits_sharp}
\end{align}
The norm $N$ is a cubic form, the trace $\Tr$ is linear, and the adjugate is quadratic.  To define the Jordan product on $\J$ we also need some auxiliary structures: we define a \define{trace bilinear form} $\Tr(-,-) \maps \J \times \J \to \C$ by
\begin{equation}
\label{eq:trace_form}
\Tr(x,y) = \Tr(x)\,\Tr(y) - \Tr(x \sharp y).
\end{equation}
and polarize the adjugate on $\J$ to get a bilinear \define{sharp map} $\sharp \maps \J \times \J \to \J$, given by 
\begin{equation}
\label{eq:sharp_map}
x \sharp y = (x + y)^{\sharp} - x^{\sharp} - y^{\sharp} .
\end{equation}

The data $(N, {}^{\sharp}, 1_\J)$ is an example of a `cubic norm structure', and every cubic norm structure determines a Jordan algebra in a uniform way \cite[Sec.\ 4.5]{McC04}, in which the Jordan product is given by
\begin{equation}\label{eq:cubic_jordan_product}
x \circ y = \tfrac{1}{2}\Big( x \sharp y + \Tr(x)\, y + \Tr(y)\, x
  - \big(\Tr(x)\Tr(y) - \Tr(x, y)\big)\, 1_\J \Big).
\end{equation}
This formula is not special to the Tits construction; it is the general recipe expressing the product in a cubic Jordan algebra in terms of its cubic norm \cite[Sec.\ 4.2]{McC04}.

Next we describe an explicit Jordan algebra isomorphism from the Albert algebra $\J$ to the Albert algebra $\h_3(\O_\C)$, found by Br\"uhne \cite[Prop.\ 3.6.1]{Br}.   Let $\Phi \maps \J \to \h_3(\O_\C)$ be the linear map that sends $A = (A_0, A_1, A_2) \in \J = \M_3(\C)^3$ to
\[ \Phi(A) = \begin{pmatrix} \xi_1 & x_3 & {x_2}^\ast \\ {x_3}^\ast & \xi_2 & x_1 \\ x_2 & {x_1}^\ast & \xi_3 \end{pmatrix} \]
where $\mathbf{e}_i $, $i=1,2,3$, is the canonical basis for $\C^3$, and we define
\begin{align*}
\xi_i &= (A_0)_{ii} \quad \text{for } i=1,2,3 \\
x_1 &= \begin{pmatrix} (A_0)_{23} & \sum (A_2)_{k1}\mathbf{e}_k \\ -\sum (A_1)_{1k}\mathbf{e}_k & (A_0)_{32} \end{pmatrix} \\
x_2 &= \begin{pmatrix} (A_0)_{31} & \sum (A_2)_{k2}\mathbf{e}_k \\ -\sum (A_1)_{2k}\mathbf{e}_k & (A_0)_{13} \end{pmatrix} \\
x_3 &= \begin{pmatrix} (A_0)_{12} & \sum (A_2)_{k3}\mathbf{e}_k \\ -\sum (A_1)_{3k}\mathbf{e}_k & (A_0)_{21} \end{pmatrix}.
\end{align*}
Thus, the diagonal of the matrix $A_0$ supplies the three diagonal entries $\xi_i$, the off-diagonal entries of $A_0$ supply the two scalar corners of each off-diagonal entry $x_j$, the $j$th column of $A_2$ supplies the upper right ($\mathbf{u}$) vector slot of $x_j$, and minus the $j$th row of $A_1$ supplies the lower left ($\mathbf{v}$) vector slot of $x_j$.

\begin{theorem} \label{thm:iso}
The linear map $\Phi \maps \J \to \h_3(\O_\C)$ is an isomorphism of Jordan algebras.
\end{theorem}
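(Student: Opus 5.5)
We will not verify the Jordan product identity $\Phi(x\circ y)=\Phi(x)\circ\Phi(y)$ directly. Instead we reduce it to a statement about cubic norms. Both $\J$ and $\h_3(\O_\C)$ are Jordan algebras of cubic norm structures, with $\h_3(\O_\C)$ carrying $(\operatorname{Det}, {}^\sharp, I)$ and $\J$ carrying $(N,{}^\sharp,1_\J)$. In each, the Jordan product is determined by the norm and the basepoint through Equation \eqref{eq:cubic_jordan_product}: the trace, trace bilinear form and sharp map are all obtained by differentiating the norm at the unit. Hence a linear bijection that sends unit to unit and intertwines the cubic norms automatically intertwines the traces, the sharp maps, and therefore the Jordan products.

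Over $\C$ this is justified as follows. $\Tr(x)$ is the coefficient of $t^2$ in $N(t1+x)$, $\Tr(x,y)$ comes from the coefficient of $t$ in $N(t1+x)$ polarized, and $\Tr(x^\sharp,y)$ is the directional derivative $\partial_y N(x)$, with $\Tr(-,-)$ nondegenerate. For $\h_3(\O_\C)$ these formulas are standard and agree with matrix multiplication via $a\circ b=\tfrac12(ab+ba)$. So the proof splits into three steps:
\begin{enumerate}
\item $\Phi$ is a linear bijection.
\item $\Phi(1_\J)=I$.
\item $\operatorname{Det}(\Phi(A))=N(A)$ for all $A$.
\end{enumerate}

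Steps 1 and 2 are immediate bookkeeping. $\Phi$ distributes the $27$ coordinates of $(A_0,A_1,A_2)$ bijectively among the $3+3\cdot 8$ coordinates of $\h_3(\O_\C)$: diagonal entries of $A_0$ go to the $\xi_i$, off-diagonal entries of $A_0$ to the scalar corners of the $x_j$, columns of $A_2$ to the upper vector slots, and rows of $A_1$ to the lower ones. For the unit, $(1,0,0)$ has $A_0=1$ and $A_1=A_2=0$, so all $x_j$ vanish and every $\xi_i=1$.

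Step 3 is the main computation. We expand Equation \eqref{eq:FJ_norm} using the Zorn formulas \eqref{eq:bioctonion_norm_and_trace} and sort terms by their multidegree in $(A_0,A_1,A_2)$, then compare with \eqref{eq:Tits_norm}.
\begin{itemize}
\item \emph{Degree $(3,0,0)$.} The terms $\xi_1\xi_2\xi_3$ and $-\xi_i n(x_i)$ contribute the scalar-corner products $(A_0)_{23}(A_0)_{32}$, and so on. Together with the scalar-corner part of $\tr((x_1x_2)x_3)$, namely $(A_0)_{23}(A_0)_{31}(A_0)_{12}+(A_0)_{32}(A_0)_{13}(A_0)_{21}$, these should reproduce the Leibniz expansion of $\det A_0$.
\item \emph{Degree $(1,1,1)$.} The vector-slot terms $\xi_i\,\mathbf{u}_i\cdot\mathbf{v}_i$ from $n(x_i)$, together with the mixed dot-product terms of $\tr((x_1x_2)x_3)$, should assemble into $-\tr(A_0A_1A_2)$. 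Here one checks that $-\xi_i n(x_i)$ contributes $\xi_i\,(A_2 A_1)$-type diagonal terms with the correct sign.
\item \emph{Pure degrees $(0,3,0)$ and $(0,0,3)$.} The only remaining pieces of $\tr((x_1x_2)x_3)$ are the triple products $\mathbf{v}_1\cdot(\mathbf{v}_2\times\mathbf{v}_3)$ and $\mathbf{u}_1\cdot(\mathbf{u}_2\times\mathbf{u}_3)$, arising from the cross-product entries of the Zorn product. These should give $\det A_1$ (rows, with the sign $(-1)^3$ absorbed by the $-\mathbf{a}'\times\mathbf{b}'$ sign) and $\det A_2$ (columns).
\end{itemize}
One must also confirm that no other multidegrees survive. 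The $\tr$ of a product of three Zorn matrices pairs each upper slot with a lower slot or forms a triple product of same-type slots, so the only admissible multidegrees are exactly $(3,0,0)$, $(1,1,1)$, $(0,3,0)$ and $(0,0,3)$.

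The obstacle I expect is sign and index bookkeeping in the $(1,1,1)$ and triple-product terms. In particular, the convention that column $j$ of $A_2$ and minus row $j$ of $A_1$ land in $x_j$ must interact correctly with the cyclic order $(x_1x_2)x_3$. I would first compute $\tr((x_1x_2)x_3)$ symbolically for general Zorn matrices once, obtaining
\[\alpha_1\alpha_2\alpha_3+\alpha_1'\alpha_2'\alpha_3'+\text{(dot terms)}+\mathbf{a}_1\cdot(\mathbf{a}_2\times\mathbf{a}_3)-\mathbf{a}_1'\cdot(\mathbf{a}_2'\times\mathbf{a}_3'),\]
and only then substitute the entries of $\Phi$. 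If a global sign mismatch appears, for example $\det A_1$ entering with the wrong sign, I would check it against $\Phi(A_1\text{ only})$ on $A_1=1$. Once $N=\operatorname{Det}\circ\Phi$ is established, the reduction above finishes the proof.
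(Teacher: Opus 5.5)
Your proposal is correct and follows the paper's proof essentially step for step. The paper also uses the cubic-norm-structure (Springer) reduction to $\Phi(1_\J)=1$ and $\operatorname{Det}(\Phi(A))=N(A)$, then expands $\tr((x_1x_2)x_3)$ via the Zorn product and groups the terms into $\det A_0$, $-\tr(A_0A_1A_2)$, $\det A_1$ and $\det A_2$, exactly as in your multidegree bookkeeping.

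One slip needs correcting. Since $\mathbf{u}_i\cdot\mathbf{v}_i=-(A_1A_2)_{ii}$, the diagonal contributions of $-\xi_i n(x_i)$ are $-(A_0)_{ii}(A_1A_2)_{ii}$, not $(A_2A_1)$-type terms. It is precisely because both these and the off-diagonal dot terms $-(A_0)_{ij}(A_1A_2)_{ji}$ involve $A_1A_2$ that they assemble into $-\tr(A_0A_1A_2)$, which in general differs from $-\tr(A_0A_2A_1)$.
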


\begin{proof}
This is an expansion of Br\"uhne's argument \cite[Prop.\ 3.6.1]{Br}.   Since the calculations are lengthy, we defer the proof to Appendix \ref{sec:iso}.
\end{proof}

\section{The complex Lie algebra \texorpdfstring{$\e_6$}{e6}}
\label{sec:e_6}

As is well known \cite[Sec.\ 3.1]{Y}, the group of linear transformations of complex Albert algebra that preserve its cubic norm is the complex form of the exceptional Lie group $\E_6$.   In what follows, we give a construction of the Lie algebra $\e_6$ starting from a trio of 3-dimensional vector spaces.  This allows us to see, very concretely, an important Lie subalgebra of $\e_6$ isomorphic to $\sl(3,\C) \oplus \sl(3,\C) \oplus \sl(3,\C)$.   

One can also see the existence of this Lie subalgebra in other ways.   Borel and de Siebenthal showed that the maximal-rank Lie subalgebras of a complex simple Lie algebra can be classified with the help of its affine Dynkin diagram.  For $\e_6$ this diagram has a beautiful symmetry, reflecting the role of triality:

\vskip 0.5em
\begin{center}
\scalebox{0.6}{
\begin{tikzpicture}[
  every node/.style={draw,circle,fill=black,inner sep=0pt,minimum size=6pt}
]
  \node (c) at (0,0) {};
  \foreach \a in {90,210,330}{
    \node (n1-\a) at (\a:1.4) {};
    \node (n2-\a) at (\a:2.8) {};
    \draw (c) -- (n1-\a) -- (n2-\a);
  }
\end{tikzpicture}
}
\end{center}

\noindent Borel and de Siebenthal's theory says that if we remove any dot from this diagram, along with the edges incident to it, we get the Dynkin diagram of a maximal-rank Lie subalgebra of $\e_6$---i.e., a Lie subalgebra whose Cartan is 6-dimensional.  The most symmetrical option is to remove the central dot:

\vskip 0.5em
\begin{center}
\scalebox{0.6}{
\begin{tikzpicture}[
  every node/.style={draw,circle,fill=black,inner sep=0pt,minimum size=6pt}
]
  \foreach \a in {90,210,330}{
    \node (n1-\a) at (\a:1.4) {};
    \node (n2-\a) at (\a:2.8) {};
    \draw (n1-\a) -- (n2-\a);
  }
\end{tikzpicture}
}
\end{center}
This is the Dynkin diagram for the complex Lie algebra $\sl(3,\C) \oplus \sl(3,\C) \oplus \sl(3,\C)$.  

This Lie subalgebra is easy to find given the complex Albert algebra $\J$ and its cubic norm:

\begin{proposition}
\label{prop:SL(3)^3_action}
The group $\SL(3,\C)^3$ acts on $\J = M_3(\C)^3$, preserving the cubic norm $N$, as follows:
\[    R(g,h,k)(A_0,A_1,A_2) = (g A_0 h^{-1}, h A_1 k^{-1}, k A_2 g^{-1}). \]
This gives a homomorphism $R \maps \SL(3,\C)^3 \to \E_6$ whose kernel is 
\[  \big\{ (\omega I, \omega I, \omega I) \,\vert \; \omega^3 = 1 \big\} \cong \Z_3 .\]
\end{proposition}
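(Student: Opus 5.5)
We check four things in order: that $R$ is a group action, that it preserves $N$, that its image therefore lies in $\E_6$, and what its kernel is.

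\emph{Action.} Each map $R(g,h,k)$ is visibly complex-linear on $\J = \M_3(\C)^3$. We also have
\[
R(g,h,k)R(g',h',k') = R(gg',hh',kk'),
\]
since for instance $g(g'A_0h'^{-1})h^{-1} = (gg')A_0(hh')^{-1}$, and similarly in the other two slots. Hence $R$ is a homomorphism from $\SL(3,\C)^3$ to $\mathrm{GL}(\J)$.

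\emph{Norm.} We check invariance of $N$ term by term using Equation \eqref{eq:Tits_norm}.
\begin{itemize}
\item Because $\det g = \det h = \det k = 1$, multiplicativity of $\det$ gives $\det(gA_0h^{-1}) = \det A_0$, and likewise for $A_1$ and $A_2$.
\item For the cubic term, the product telescopes:
\[
(gA_0h^{-1})(hA_1k^{-1})(kA_2g^{-1}) = g\,A_0A_1A_2\,g^{-1},
\]
and cyclicity of the trace gives $\tr(gA_0A_1A_2g^{-1}) = \tr(A_0A_1A_2)$.
\end{itemize}
So $N \circ R(g,h,k) = N$.

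\emph{Image in $\E_6$.} We take $\E_6$ to be the group of linear transformations of the complex Albert algebra preserving its cubic norm. By Theorem \ref{thm:iso}, $\Phi$ identifies $\J$ with $\h_3(\O_\C)$. We must know that $\Phi$ intertwines $N$ with $\operatorname{Det}$, or at least that the norm-preserving groups agree. This should follow because the cubic norm of a cubic Jordan algebra is determined by its Jordan structure: it is the generic norm. Alternatively it is part of the calculation deferred to Appendix \ref{sec:iso}. Either way, $R$ lands in $\E_6$.

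\emph{Kernel.} Suppose $R(g,h,k) = \mathrm{id}$.
\begin{itemize}
\item Put $A_1 = A_2 = 0$. Then $gA_0h^{-1} = A_0$ for all $A_0$. Taking $A_0 = I$ gives $g = h$, and then $g$ commutes with all of $\M_3(\C)$, so $g = \omega I$ for some scalar $\omega$.
\item The same argument on the other two slots gives $h = k$ and $k = g$.
\item Finally $\det(\omega I) = \omega^3 = 1$.
\end{itemize}
Conversely, $(\omega I, \omega I, \omega I)$ acts on each slot by $\omega\,A_i\,\omega^{-1} = A_i$, i.e.\ trivially. So the kernel is exactly $\{(\omega I,\omega I,\omega I) : \omega^3 = 1\} \cong \Z_3$.

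The only step that is not a one-line computation is the identification of the norm-preserving group of $(\J, N)$ with $\E_6$ as defined on $\h_3(\O_\C)$, that is, checking $\operatorname{Det}\circ\Phi = N$. If the appendix does not already contain this, I would verify it directly. Expanding Equation \eqref{eq:FJ_norm} on $\Phi(A)$ should give:
\begin{itemize}
\item $\det A_0$, coming from $\xi_1\xi_2\xi_3$ together with the scalar-corner parts of the $n(x_i)$ terms and of $\tr(x_1x_2x_3)$;
\item $\det A_1$ and $\det A_2$, coming from the cross-product terms in the Zorn trace;
\item the mixed term $-\tr(A_0A_1A_2)$, coming from the dot-product cross terms.
\end{itemize}
This bookkeeping is the main obstacle.
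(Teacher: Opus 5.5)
Your proof is correct and follows the same route as the paper. It proves invariance of $N$ term by term, using multiplicativity of $\det$ and the telescoping product $gA_0A_1A_2g^{-1}$ under the trace. It finds the kernel by first using the identity matrix to force $g=h$ (and likewise $h=k$, $k=g$), then using centrality to get $\omega I$ with $\omega^3=1$. The step you flagged, $\operatorname{Det}\circ\Phi = N$, is exactly what the appendix proof of Theorem~\ref{thm:iso} establishes, so no further bookkeeping is needed; the paper's own proof leaves this identification with $\E_6$ implicit.
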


\begin{proof}
The preservation of $N$ is evident from Equation \eqref{eq:Tits_norm}, which says $N(A_0, A_1, A_2) = \det(A_0) +  \det(A_1) +  \det(A_2) - \tr(A_0 A_1 A_2)$.  A transformation
\[     \begin{array}{ccc} \M_3(\C) &\to& \M_3(\C) \\
                              X   &\mapsto& aXb 
                              \end{array}
\]
for $a, b\in \SL(3,\C)$ only preserves the identity matrix if $a = b^{-1}$, and is then only the identity if $a$ is in the center of $\SL(3,\C)$, which implies that $a = \omega I$ for $\omega$ a cube root of unity.  Thus the kernel of $\rho$ is of the stated form.
\end{proof}

Since the kernel of $R$ is discrete, its differential $\rho = dR$ is an inclusion of Lie algebras
\[   \sl(3,\C)^3 \hookrightarrow \e_6 .\]

In fact one can describe the construction of the complex Albert algebra and the group of transformations preserving its cubic norm more functorially.  It is already clear from Section \ref{sec:complex_Albert_algebra} that for any 3-dimensional complex vector space $V$, the vector space $\End(V) \oplus \End(V) \oplus \End(V)$ with cubic norm
\[  N(A_0, A_1, A_2) = \det(A_0) + \det(A_1) + \det(A_2) - \tr(A_0 A_1 A_2) \]
is isomorphic to $\J$ with its cubic norm as in Equation \eqref{eq:Tits_norm}.  However, we can proceed more generally: we can start with \emph{three} 3-dimensional vector spaces.  This clarifies the appearance of $\SL(3,\C)^3$.

Suppose $\V = (V_0, V_1, V_2)$ is a triple of 3-dimensional complex vector spaces equipped with complex volume forms $\omega_i$.  We can build a version of the complex Albert algebra $\J$ starting from these vector spaces:
\[   \J_\V = \Hom(V_0, V_1) \oplus \Hom(V_1, V_2) \oplus \Hom(V_2, V_0) \]
and we can give it the cubic norm
\[   N_\V(A_0, A_1, A_2) = \det(A_0) + \det(A_1) + \det(A_2) - \tr(A_0 A_1 A_2) .\]
Here we use the fact that the determinant of a linear map between different vector spaces of the same dimension is well-defined when each is equipped with a volume form.   

Let $G(\J_\V)$ be the Lie group of invertible linear transformations of $\J_\V$ that preserve the cubic norm $N_\V$.    Clearly $\J_\V$ with its cubic norm $N_\V$ is isomorphic to $\J$ with the cubic form $N$ in Equation \eqref{eq:Tits_norm}.  Thus, $G(\J_\V)$ is isomorphic to the complex form of $\E_6$. 

It follows that the Lie algebra of $G(\J_\V)$ is isomorphic to the complex Lie algebra $\e_6$.   We call it $\e_6(\V)$ to indicate its dependence on the triple $\V$.  We now describe this Lie algebra explicitly as a $\Z_3$-graded Lie algebra with
\[
 \e_6(\V) \;=\; \g_0 \oplus \g_1 \oplus \g_2
\]
where
\[
\begin{array}{lcl}
  \g_0 &=& \sl(V_0)\oplus\sl(V_1)\oplus\sl(V_2), \\ [3pt]
  \g_1 &=& V_0\otimes V_1\otimes V_2 , \\ [3pt]
    \g_{-1} &=& V_0^{*}\otimes V_1^{*}\otimes V_2^{*}.
\end{array}
\]

To do this, we need two cross products.  These are special cases of the exterior and regressive products discussed in Section \ref{sec:split_octonion_algebras}, but expressed in terms of $V_i^\ast$ rather than the naturally isomorphic space $\Lambda^2 V_i$.  The first is a version of the exterior product $\wedge \maps V_i \times V_i \to \Lambda^2 V_i$.  For all $\mathbf{a},\mathbf{b} \in V_i$ we define $\mathbf{a}\times\mathbf{b}\in V_i^{*}$ by
\begin{equation}
\label{eq:ss}
  (\mathbf{a}\times\mathbf{b})(\mathbf{c}) \;=\; \omega_i(\mathbf{a},\mathbf{b},\mathbf{c}).
\end{equation}
for all $\mathbf{c} \in V_i$. The second is a version of the regressive product $\vee \maps \Lambda^2 V_i \times \Lambda^2 V_i \to V_i$.  For $g,h\in V_i^{*}$ we define $g\times h\in V_i$ by
\begin{equation}
\label{eq:s}
  \omega_i\bigl(\mathbf{a},\,\mathbf{b},\,g\times h\bigr) \;=\; g(\mathbf{a})\,h(\mathbf{b})-g(\mathbf{b})\,h(\mathbf{a})
\end{equation}
for all $\mathbf{a},\mathbf{b}\in V_i$.   Both cross products are bilinear and antisymmetric, and they obey the identities
\[
\begin{array}{ccl}
  (\mathbf{a}\times\mathbf{b})\times f &=& f(\mathbf{a})\,\mathbf{b}-f(\mathbf{b}),\\ [3pt]
  h\times (\mathbf{a}\times\mathbf{b}) &=& h(\mathbf{b})\,\mathbf{a}-h(\mathbf{a}).
\end{array}
\]
We define the bracket on $\e_6(\V)$ grade by grade, following Adams \cite[Chap.\ 13]{Ad} and Draper--Mart\'in \cite[Eq.\ 8]{DM}.

\medskip
\noindent \define{$[\g_0,\g_0]$.}
The three subspaces $\sl(V_i) \subset \g_0$ commute, and inside each one the bracket is the usual commutator:
\[
  [\phi,\psi]=\phi\psi-\psi\phi
\]
for $\phi,\psi\in\sl(V_i)$.

\medskip
\noindent \define{$[\g_0,\g_1]$ and $[\g_0,\g_2]$.}
The subspace $\sl(V_i)$ acts on the $i$th tensor slot by the natural (resp.\ dual) representation, trivially on the others.  For example, for $\phi_0 \in \sl(V_0)$ we have
\[
\begin{array}{ccr}
  [\phi,\;\mathbf{a}_0\otimes\mathbf{a}_1\otimes\mathbf{a}_2]
    &=&\phi_0(\mathbf{a}_0)\otimes \mathbf{a_1} \otimes\mathbf{a}_2, 
    \\ [3pt]
  [\phi_i,\;f_0\otimes f_1\otimes f_2]
    &=& -\phi_0^{\! *} (f_0) \otimes f_1 \otimes f_2,
\end{array}
\]
with $\phi_i^{\!*}$ the dual action on $V_i^{*}$.

\medskip
\noindent \define{$[\g_1,\g_1]$.}  We use the cross product in Equation \eqref{eq:ss}:
\[
  [\,\mathbf{a}_0\otimes\mathbf{a}_1\otimes\mathbf{a}_2,\ \mathbf{b}_0\otimes\mathbf{b}_1\otimes\mathbf{b}_2\,]
  = (\mathbf{a}_0\times\mathbf{b}_0)\otimes(\mathbf{a}_1\times\mathbf{b}_1)\otimes(\mathbf{a}_2\times\mathbf{b}_2) .\]

\medskip
\noindent \define{$[\g_2,\g_2]$.}
We use the cross product in Equation \eqref{eq:s}:
\[
  [\,f_0\otimes f_1\otimes f_2,\ g_0\otimes g_1\otimes g_2\,]
  = (f_0\times g_0)\otimes(f_1\times g_1)\otimes(f_2\times g_2).
\]

\medskip
\noindent \define{$[\g_1,\g_2]$.} For $u=\mathbf{a}_0\otimes\mathbf{a}_1\otimes\mathbf{a}_2$ and $\varphi=f_0\otimes f_1\otimes f_2$,
\[
  [\,u,\varphi\,]
  = \sum_{i\in \Z_3}\Bigl(\textstyle\prod_{j\neq i}f_j(\mathbf{a}_j)\Bigr)
        \Bigl(\mathbf{a}_i\otimes f_i-\tfrac13\,f_i(\mathbf{a}_i)\,1_{V_i}\Bigr),
\]
where $\mathbf{a}_i\otimes f_i\in V_i \otimes V_i^\ast \cong \End(V_i)$ denoteds the map
$\mathbf{y} \mapsto f_i(\mathbf{y})\,\mathbf{a}_i$. Since $\tr(\mathbf{a}_i\otimes f_i)=f_i(\mathbf{a}_i)$ and
$\dim V_i=3$, the subtracted term projects the $i$th summand into $\sl(V_i)$; the other slots $j\neq i$ multiply by the scalars $f_j(\mathbf{a}_j)$.

\medskip

Next we describe the representation of $\e_6(\V)$ on $\J_\V$:

\medskip

\define{Action of $\g_0$.}  The subalgebra $\g_0 = \sl(V_0) \oplus \sl(V_1) \oplus \sl(V_2)$ acts by pre-- and post--composition.  For $\phi_i\in\sl(V_i)$ and $T\in\Hom(V_a,V_b)$,
\begin{equation}
  \phi_i\cdot T \;=\;
  \begin{cases}
    \phi_b\,T, & i=b,\\[2pt]
    -\,T\,\phi_a, & i=a,\\[2pt]
    0, & i\notin\{a,b\}.
  \end{cases}
\end{equation}
(The index $i$ carried by $T$ is acted on in the two slots it occupies.)

\medskip

\define{Action of $\g_1$.}  For $u=\mathbf{a}_0\otimes\mathbf{a}_1\otimes\mathbf{a}_2\in\g_1$ and
$T\in \Hom(V_{i},V_{i+1})$, the image lies in
$\Hom(V_{i+1},V_{i+2})$ and is given by
\begin{equation}\label{eq:raise}
  (u\cdot T)(\mathbf{y})
  \;=\; \omega_{i+1}\!\bigl(\mathbf{a}_{i+1},\ T\mathbf{a}_{i},\ \mathbf{y}\bigr)\, \mathbf{a}_{i+2}
\end{equation}
for all $\mathbf{y}\in V_{i+1}$.

\medskip 

\define{Action of $\g_2$.}
For $\varphi=f_0\otimes f_1\otimes f_2\in\g_2$ and
$T\in \Hom(V_{i},V_{i+1})$, the image lies in
$\Hom(V_{i-1},V_{i})$ and is given by
\begin{equation}\label{eq:lower}
  (\varphi\cdot T)(\mathbf{a})
  \;=\; f_{i-1}(\mathbf{a})\,\bigl[(f_{i+1}\circ T)\times f_{i}\bigr],
  \qquad \mathbf{a}\in V_{i-1},
\end{equation}
where $f_{i+1}\circ T\in V_{i}^{*}$ and the cross product
$(f_{i+1}\circ T)\times f_{i}\in V_{i}$ is defined via Equation \eqref{eq:s}.
\medskip 

Just as $\e_6(\V)$ is $\Z_3$-graded, so is $\J_\V$, with
\[ 
  \J_\V^{(i)} = \Hom\bigl(V_{i},V_{i+1}\bigr).
\]
The representation of $\e_g(\V)$ on $\J_\V$ respects these $\Z_3$-gradings, with $\g_a$ mapping $\J^{(i)}$ to $\J^{(i+a)}$.

\section{Conclusion}

We have seen that 3-dimensional geometry pervades the mathematics of the octonions, split octonions, bioctonions, complex Albert algebra and $\E_6$.  However, the geometrical meaning of this fact remains somewhat obscure.  Is there a viewpoint from which it seems completely natural that these exceptional algebraic structures emerge from 3-dimensional geometry?  

\appendix
\section{\texorpdfstring{The isomorphism $\Phi \colon \J \to \h_3(\O_\C)$}{The isomorphism Phi: J -> h3(OC)}}
\label{sec:iso}

\addtocounter{dummy}{-2}
\begin{theorem} 
The linear map $\Phi \maps \J \to \h_3(\O_\C)$ is an isomorphism of Jordan algebras.
\end{theorem}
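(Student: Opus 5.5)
The argument will not check the Jordan product entry by entry. Both algebras are cubic Jordan algebras whose products come from the same recipe, Equation \eqref{eq:cubic_jordan_product}, applied to a cubic norm structure. So it suffices to show that $\Phi$ is a linear bijection that preserves the basepoint, the cubic norm, and the adjugate (equivalently the sharp map), together with the linear trace.

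On $\h_3(\O_\C)$ I will use the norm $\operatorname{Det}$ of Equation \eqref{eq:FJ_norm}, the unit matrix as basepoint, the trace $\xi_1+\xi_2+\xi_3$, and the standard Freudenthal adjugate
\[
Y^\sharp = \bigl(\xi_2\xi_3 - n(x_1),\ \xi_3\xi_1-n(x_2),\ \xi_1\xi_2-n(x_3);\ (x_2x_3)^\ast-\xi_1x_1,\ \dots\bigr).
\]
It is standard that this cubic norm structure produces the usual product $\tfrac12(ab+ba)$ on $\h_3(\O_\C)$.

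The steps are as follows.

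\begin{enumerate}
\item \emph{Bijection.} By the explicit description of $\Phi$, the coordinates of $A_0$, $A_1$ and $A_2$ go to distinct coordinates of $\Phi(A)$: 9 diagonal and corner scalars from $A_0$, and 9 vector components each from $A_1$ and $A_2$, giving 27 in total. Hence $\Phi$ is a linear bijection.

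\item \emph{Basepoint and trace.} Clearly $\Phi(1,0,0)$ is the identity matrix, since all corner entries of the $x_j$ vanish. Also $\Tr(A)=\tr A_0=\sum\xi_i$.

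\item \emph{Norm.} Show $\operatorname{Det}(\Phi(A)) = N(A)$ by expanding both sides using the Zorn formulas \eqref{eq:bioctonion_norm_and_trace}.
  \begin{itemize}
  \item Write $x_j=\begin{pmatrix}\alpha_j&\mathbf u_j\\ \mathbf v_j&\alpha'_j\end{pmatrix}$. Then $n(x_j)=\alpha_j\alpha'_j-\mathbf u_j\cdot\mathbf v_j$.
  \item The terms $\xi_1\xi_2\xi_3-\sum\xi_j\alpha_j\alpha'_j$ together with the purely corner part of $\tr(x_1x_2x_3)$ should reproduce $\det A_0$ via the Leibniz expansion.
  \item The cross terms $\xi_j\,\mathbf u_j\cdot\mathbf v_j$ and the mixed corner–vector terms of $\tr(x_1x_2x_3)$ should assemble into $-\tr(A_0A_1A_2)$.
  \item Finally, the triple-product terms $\mathbf u_1\cdot(\mathbf u_2\times\mathbf u_3)$ and $\mathbf v_1\cdot(\mathbf v_2\times\mathbf v_3)$, which arise from the cross products in the Zorn product, are exactly $\det A_2$ and $-\det(-A_1)=\det A_1$, because the $\mathbf u_j$ are the columns of $A_2$ and the $\mathbf v_j$ are minus the rows of $A_1$.
  \end{itemize}
  Since $\operatorname{Det}\circ\Phi$ and $N$ are both cubic polynomials, a degree-by-degree comparison in the three blocks suffices.

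\item \emph{Adjugate.} A general fact about cubic norm structures over $\C$ (McCrimmon) says that, once the norm and basepoint are preserved, the trace form $\Tr(x,y)=-\partial^2_{x,y}\log N|_{1}$ and the adjugate, determined by $\Tr(x^\sharp,y)=\partial_yN(x)$, are preserved automatically, provided the trace form is nondegenerate. This holds on both sides. Therefore $\Phi(A^\sharp)=\Phi(A)^\sharp$ follows from step 3, and the Jordan products correspond under \eqref{eq:cubic_jordan_product}.
\end{enumerate}

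The main obstacle is step 3. Matching $\tr((x_1x_2)x_3)$ against $\tr(A_0A_1A_2)$ requires careful bookkeeping of the Zorn product's signs (the $-\mathbf a'\times\mathbf b'$ entry) and of the index conventions: columns of $A_2$, minus rows of $A_1$, and the placement $(A_0)_{23}$ versus $(A_0)_{32}$ in the corners. I would first verify it on rank-one elements, for example with only $A_1$ and $A_2$ each supported on single basis matrices, to fix signs, and then extend by multilinearity.

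If the general principle in step 4 seems too much to cite, the fallback is to check $\Phi(A^\sharp)=\Phi(A)^\sharp$ directly. The block formula \eqref{eq:Tits_sharp} makes this a parallel computation: for example, the $(1,1)$ entry $\xi_2\xi_3-n(x_1)$ should equal $(A_0^\sharp-A_1A_2)_{11}$.
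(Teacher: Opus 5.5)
Your proposal follows the paper's proof. You reduce, via the fact that a unit-preserving linear bijection preserving the cubic norm is a Jordan isomorphism (the paper cites this as the Springer construction), to showing $\operatorname{Det}(\Phi(A)) = N(A)$, and you split that expansion the same way: the pure-$A_0$ terms give $\det A_0$, the mixed terms give $-\tr(A_0A_1A_2)$, and the two triple products give $\det A_2 + \det A_1$, with the Zorn sign correctly accounted for. The only additions are small ones the paper leaves implicit: your explicit check of bijectivity, and your justification of the reduction through nondegeneracy of the trace form.
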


\begin{proof}
This is an expansion of Br\"uhne's argument \cite[Prop.\ 3.6.1]{Br}.    Thanks to the Springer construction \cite[Sec.\ 3.8]{McC04}, a linear bijection $\Phi \maps \J \to \h_3(\O_\C)$ satisfying $\Phi(1)=1$ is a Jordan isomorphism if $\operatorname{Det}(\Phi(X)) = N(X)$ for all $X$, where $N$ is the cubic norm of the first Tits construction and $\operatorname{Det}$ is the cubic norm \eqref{eq:FJ_norm} of $\h_3(\O_\C)$. Since $\Phi(1,0,0)=1$, it suffices to check that $\Phi$ preserves the cubic norm.

As described, for any triple $A = (A_0, A_1, A_2) \in \J$ we define 
\begin{equation}\label{Phi(X) form}
\Phi(A) = \begin{pmatrix} \xi_1 & x_3 & {x_2}^\ast \\ {x_3}^\ast & \xi_2 & x_1 \\ x_2 & {x_1}^\ast & \xi_3 \end{pmatrix} \in \h_3(\O_\C)
\end{equation}
where
\begin{equation}
\xi_i = (A_0)_{ii}
\end{equation}
and
\begin{equation}
x_1 = \begin{pmatrix} (A_0)_{23} & \mathbf{u}_1 \\ \mathbf{v}_1 & (A_0)_{32} \end{pmatrix}, \;
x_2 = \begin{pmatrix} (A_0)_{31} & \mathbf{u}_2 \\ \mathbf{v}_2 & (A_0)_{13} \end{pmatrix}, \;
x_3 = \begin{pmatrix} (A_0)_{12} & \mathbf{u}_3 \\ \mathbf{v}_3 & (A_0)_{21} \end{pmatrix}
\end{equation}
where the components of $\mathbf{u}_j$ and $\mathbf{v}_j$ are the columns of $A_2$ and minus the rows of $A_1$, respectively:
\begin{equation}
\mathbf{u}_j = \sum_{k=1}^3 (A_2)_{kj}\mathbf{e}_k, \quad \mathbf{v}_j = -\sum_{k=1}^3 (A_1)_{jk}\mathbf{e}_k.
\end{equation}

In what follows, we compute
\begin{equation}\label{Det to compute}
\operatorname{Det}(\Phi(A)) = \xi_1 \xi_2 \xi_3 - \xi_1 n(x_1) - \xi_2 n(x_2) - \xi_3 n(x_3) + \tr((x_1 x_2) x_3)
\end{equation}
term by term, and ultimately show that it equals the cubic norm $N(A)$ of Equation \eqref{eq:Tits_norm}.

The pure scalar diagonal product is $\xi_1 \xi_2 \xi_3 = (A_0)_{11} (A_0)_{22} (A_0)_{33}$.
Evaluating the component composition norms via the norm on $\O_\C$ yields
\begin{equation}
n(x_1) = (A_0)_{23}(A_0)_{32} - \mathbf{u}_1 \cdot \mathbf{v}_1 = (A_0)_{23}(A_0)_{32} + (A_1 A_2)_{11}.
\end{equation}
Multiplying by $\xi_1 = (A_0)_{11}$ and cyclically permuting for all three indices, the sum of these terms is
\begin{equation}\label{xi n sum}
\sum_{i=1}^3 \xi_i n(x_i) = \sum_{\text{cyclic}} (A_0)_{11}(A_0)_{23}(A_0)_{32} + \sum_{i=1}^3 (A_0)_{ii}(A_1 A_2)_{ii}.
\end{equation}

To compute the Zorn product
$x_1 x_2 = \begin{pmatrix} \alpha_{12} & \mathbf{a}_{12} \\ \mathbf{b}_{12} & \beta_{12} \end{pmatrix}$,
recall that
\[
x_1 = \begin{pmatrix} (A_0)_{23} & \mathbf{u}_1 \\ \mathbf{v}_1 & (A_0)_{32} \end{pmatrix},
\qquad
x_2 = \begin{pmatrix} (A_0)_{31} & \mathbf{u}_2 \\ \mathbf{v}_2 & (A_0)_{13} \end{pmatrix},
\]
so that in the notation of the Zorn matrix product rule  
\[
\begin{pmatrix} \alpha & \mathbf{a}\\ \mathbf{a}' & \alpha' \end{pmatrix}
\begin{pmatrix} \beta & \mathbf{b}\\ \mathbf{b}' & \beta' \end{pmatrix}
=
\begin{pmatrix}
 \alpha\beta+ \mathbf{a}\cdot\mathbf{b}' & \alpha\mathbf{b}+\beta'\mathbf{a}-\mathbf{a}'\times\mathbf{b}'\\
 \beta\mathbf{a}' +\alpha'\mathbf{b}'+\mathbf{a}\times\mathbf{b} & \mathbf{a}'\cdot\mathbf{b}+ \alpha'\beta'
\end{pmatrix}
\]
we have
\[
\alpha = (A_0)_{23},\quad \mathbf{a}=\mathbf{u}_1,\quad \mathbf{a}'=\mathbf{v}_1,\quad \alpha'=(A_0)_{32},
\]
\[
\beta = (A_0)_{31},\quad \mathbf{b}=\mathbf{u}_2,\quad \mathbf{b}'=\mathbf{v}_2,\quad \beta'=(A_0)_{13}.
\]
and thus
\begin{align}
\alpha_{12} &= (A_0)_{23}(A_0)_{31} + \mathbf{u}_1 \cdot \mathbf{v}_2 \label{alpha raw}\\
\mathbf{a}_{12} &= (A_0)_{23}\mathbf{u}_2 + (A_0)_{13}\mathbf{u}_1 - \mathbf{v}_1 \times \mathbf{v}_2 \\
\mathbf{b}_{12} &= (A_0)_{31}\mathbf{v}_1 + (A_0)_{32}\mathbf{v}_2 + \mathbf{u}_1 \times \mathbf{u}_2 \\
\beta_{12} &= \mathbf{v}_1 \cdot \mathbf{u}_2 + (A_0)_{32}(A_0)_{13} . \label{beta raw}
\end{align}
Next we express the two dot products above in terms of $A_1$ and $A_2$. Since
$(\mathbf{u}_j)_k = (A_2)_{kj}$ and $(\mathbf{v}_j)_k = -(A_1)_{jk}$, every such dot
product contracts a row index of $A_1$ with a column index of $A_2$:
\begin{equation}\label{dot products}
\begin{array}{c}
\mathbf{u}_i \cdot \mathbf{v}_j = -\displaystyle{\sum_{k=1}^3} (A_1)_{jk}(A_2)_{ki} = -(A_1A_2)_{ji}, \\ [5pt]
\mathbf{v}_i \cdot \mathbf{u}_j = -\displaystyle{\sum_{k=1}^3} (A_1)_{ik}(A_2)_{kj} = -(A_1A_2)_{ij} .
\end{array}
\end{equation}
Only entries of $A_1A_2$ can occur here, never entries of $A_2A_1$.
Applying \eqref{dot products} to \eqref{alpha raw} and \eqref{beta raw} yields
\begin{align}
\alpha_{12} &= (A_0)_{23}(A_0)_{31} - (A_1 A_2)_{21} \\
\beta_{12} &= (A_0)_{32}(A_0)_{13} - (A_1 A_2)_{12} .
\end{align}
Next we multiply $x_1x_2$ by $x_3$. In the notation of the product rule the two factors are
\[
x_1x_2 = \begin{pmatrix} \alpha_{12} & \mathbf{a}_{12}\\ \mathbf{b}_{12} & \beta_{12}\end{pmatrix},
\qquad
x_3 = \begin{pmatrix} (A_0)_{12} & \mathbf{u}_3\\ \mathbf{v}_3 & (A_0)_{21}\end{pmatrix},
\]
so $\alpha = \alpha_{12}$, $\mathbf{a}=\mathbf{a}_{12}$, $\mathbf{a}'=\mathbf{b}_{12}$,
$\alpha'=\beta_{12}$ and $\beta=(A_0)_{12}$, $\mathbf{b}=\mathbf{u}_3$,
$\mathbf{b}'=\mathbf{v}_3$, $\beta'=(A_0)_{21}$. Only the diagonal entries of the
product are needed. They are
\begin{align}
\big((x_1x_2)x_3\big)_{11} &= \alpha_{12}(A_0)_{12} + \mathbf{a}_{12}\cdot\mathbf{v}_3, \\
\big((x_1x_2)x_3\big)_{22} &= \mathbf{b}_{12}\cdot\mathbf{u}_3 + \beta_{12}(A_0)_{21},
\end{align}
and adding them gives
\begin{equation}\label{trace split}
\tr((x_1 x_2) x_3) = \alpha_{12}(A_0)_{12} + \mathbf{a}_{12}\cdot\mathbf{v}_3 + \beta_{12}(A_0)_{21} + \mathbf{b}_{12}\cdot\mathbf{u}_3.
\end{equation}
It remains to expand the two dot products above. Substituting the expressions for
$\mathbf{a}_{12}$ and $\mathbf{b}_{12}$ found above, we obtain
\begin{align}
\mathbf{a}_{12}\cdot\mathbf{v}_3 &= (A_0)_{23}(\mathbf{u}_2\cdot\mathbf{v}_3) + (A_0)_{13}(\mathbf{u}_1\cdot\mathbf{v}_3) - (\mathbf{v}_1\times\mathbf{v}_2)\cdot\mathbf{v}_3, \label{a dot v}\\
\mathbf{b}_{12}\cdot\mathbf{u}_3 &= (A_0)_{31}(\mathbf{v}_1\cdot\mathbf{u}_3) + (A_0)_{32}(\mathbf{v}_2\cdot\mathbf{u}_3) + (\mathbf{u}_1\times\mathbf{u}_2)\cdot\mathbf{u}_3. \label{b dot u}
\end{align}
Four of the dot products here can be computed using \eqref{dot products}:
\begin{equation}\label{four dots}
\begin{array}{l}
\mathbf{u}_2\cdot\mathbf{v}_3 = -(A_1A_2)_{32}, \\
\mathbf{u}_1\cdot\mathbf{v}_3 = -(A_1A_2)_{31}, \\
\mathbf{v}_1\cdot\mathbf{u}_3 = -(A_1A_2)_{13}, \\
\mathbf{v}_2\cdot\mathbf{u}_3 = -(A_1A_2)_{23}.
\end{array}
\end{equation}
For the two scalar triple products, recall that the determinant of a $3\times3$
matrix is the triple product of its columns, and equally of its rows. Since
$\mathbf{u}_1,\mathbf{u}_2,\mathbf{u}_3$ are the columns of $A_2$ and
$\mathbf{v}_1,\mathbf{v}_2,\mathbf{v}_3$ are minus the rows of $A_1$,
\begin{align}
(\mathbf{u}_1 \times \mathbf{u}_2)\cdot\mathbf{u}_3 &= \det(A_2), \label{u triple}\\
(\mathbf{v}_1 \times \mathbf{v}_2)\cdot\mathbf{v}_3 &= (-1)^3\det(A_1) = -\det(A_1). \label{v triple}
\end{align}
Note the sign with which \eqref{v triple} enters: the term appearing in
\eqref{a dot v} is $-(\mathbf{v}_1\times\mathbf{v}_2)\cdot\mathbf{v}_3 = \det(A_1)$.

Substituting \eqref{four dots}, \eqref{u triple} and \eqref{v triple} into
\eqref{a dot v} and \eqref{b dot u} gives
\begin{align}
\mathbf{a}_{12}\cdot\mathbf{v}_3 &= -(A_0)_{23}(A_1A_2)_{32} - (A_0)_{13}(A_1A_2)_{31} + \det(A_1), \\
\mathbf{b}_{12}\cdot\mathbf{u}_3 &= -(A_0)_{31}(A_1A_2)_{13} - (A_0)_{32}(A_1A_2)_{23} + \det(A_2),
\end{align}
while the two scalar terms in \eqref{trace split} are
\begin{align}
\alpha_{12}(A_0)_{12} &= (A_0)_{12}(A_0)_{23}(A_0)_{31} - (A_0)_{12}(A_1A_2)_{21}, \\
\beta_{12}(A_0)_{21} &= (A_0)_{21}(A_0)_{32}(A_0)_{13} - (A_0)_{21}(A_1A_2)_{12} .
\end{align}
Adding all four, the six mixed terms are precisely the six off-diagonal pairs
$(i,j)$, and we obtain
\begin{multline}\label{trace final}
\tr((x_1 x_2) x_3) = (A_0)_{12}(A_0)_{23}(A_0)_{31} + (A_0)_{21}(A_0)_{32}(A_0)_{13} \\
- \sum_{i \neq j} (A_0)_{ij}(A_1A_2)_{ji} + \det(A_1) + \det(A_2).
\end{multline}

To finish computing $\operatorname{Det}(\Phi(A))$, we substitute into the defining equation \eqref{Det to compute} the trace \eqref{trace final}, the product $\xi_1\xi_2\xi_3 = (A_0)_{11}(A_0)_{22}(A_0)_{33}$, and the sum $\sum_i \xi_i n(x_i)$ of \eqref{xi n sum}. The result splits into three kinds of terms.

\medskip
\noindent\textbf{(1) Terms in $A_0$ alone.}
These are $\xi_1\xi_2\xi_3$, minus the six-term cyclic sum in \eqref{xi n sum},
and the first two terms of \eqref{trace final}. These are exactly the six terms of $\det(A_0)$:
\begin{multline}
(A_0)_{11}(A_0)_{22}(A_0)_{33} - (A_0)_{11}(A_0)_{23}(A_0)_{32} - (A_0)_{22}(A_0)_{31}(A_0)_{13} \\
- (A_0)_{33}(A_0)_{12}(A_0)_{21} + (A_0)_{12}(A_0)_{23}(A_0)_{31} + (A_0)_{21}(A_0)_{32}(A_0)_{13} = \det(A_0).
\end{multline}

\medskip
\noindent\textbf{(2) Mixed terms.}
The diagonal contributions $-\sum_i (A_0)_{ii}(A_1A_2)_{ii}$ come from
$-\sum_i \xi_i n(x_i)$, i.e.\ the negative of \eqref{xi n sum}, and the off-diagonal contributions
$-\sum_{i\neq j} (A_0)_{ij}(A_1A_2)_{ji}$ come from \eqref{trace final}. Together
the restriction $i \neq j$ is lifted and the sum becomes a full contraction:
\begin{equation}
-\sum_{i,j=1}^3 (A_0)_{ij}(A_1A_2)_{ji} = -\tr(A_0A_1A_2).
\end{equation}

\medskip
\noindent\textbf{(3) Determinants of $A_1$ and $A_2$.}
These come from the two scalar triple products in \eqref{trace final}, and by
\eqref{u triple} and \eqref{v triple} they contribute
\begin{equation}
\det(A_1) + \det(A_2).
\end{equation}

\medskip
\noindent Combining the three parts,
\begin{equation}\label{Det computed}
\operatorname{Det}(\Phi(A)) = \det(A_0) + \det(A_1) + \det(A_2) - \tr(A_0A_1A_2) = N(A).
\end{equation}
Thus $\operatorname{Det}(\Phi(A)) = N(A)$ for all $A$, and $\Phi$ is an isomorphism of
Jordan algebras.
\end{proof}


\begin{thebibliography}{[KMRT]}

\bibitem{Ach1} G.\ Achhammer, The first Tits construction of Albert algebras over locally ringed spaces, in \textsl{Nonassociative Algebra and its Applications} (Oviedo, 1993), Math.\ Appl.\ \textbf{303}, Kluwer, Dordrecht, 1994, pp.\ 8--11.

\bibitem{Ach2} G.\ Achhammer, \textsl{Albert-Algebren \"uber lokal geringten R\"aumen}, Ph.D.\ thesis, FernUniversit\"at Hagen, 1995.

\bibitem{Ad} J.~F. Adams, \textit{Lectures on Exceptional Lie Groups}, eds.\ Z.\ Mahmud and M.\ Mimura, Chicago Lectures in Mathematics,
U.\ Chicago Press, Chicago, 1996.


\bibitem{Al21} S.\ Alsaody, Albert algebras over rings and related torsors, \textsl{Canad.\ J.\ Math. }\textbf{73} (2021), 875--898.

\bibitem{Ba} J.\ C.\ Baez, The octonions, \textsl{Bull.\ Amer.\ Math.\ Soc.\ }\textbf{39} (2002), 145–205. Errata in \textsl{Bull.\ Amer.\ Math.\ Soc.\ }\textbf{42} (2005), 213.  Also available as \href{https://arxiv.org/abs/math/0105155}{arXiv:math/0105155}.

\bibitem{BS} J.\ C.\ Baez and P.\ Schwahn, The Standard Model gauge group from the exceptional Jordan algebra, 2026.   Also available at \href{http://arxiv.org/abs/2606.15235}{arXiv:2606.15235}.

\bibitem{BBR} M.\ Barnabei, A.\ Brini and G.-C.\ Rota, On the exterior calculus of invariant theory, \textsl{J.\ Algebra\ }\textbf{96}(1) (1985), 120--160.

\bibitem{Br} P.\ Br\"uhne, \textsl{Ordnungen und die Tits-Konstruktionen von Albert-Algebren}, Ph.D.\ thesis, FernUniversit\"at Hagen, 2000.

\bibitem{Crowe} M.~J.\ Crowe, \textsl{A History of Vector Analysis: The Evolution of the Idea of a Vectorial System}, Dover, New York, 1994.

\bibitem{DM} C.\ Draper and C.\ Mart\'in, Gradings on the Albert algebra and on $\mathfrak{f}_4$, \textsl{Rev.\ Mat.\ Iberoam. }\textbf{25} (2009), 841--908.  Also available as \href{https://arxiv.org/abs/math/0703840}{arXiv:math/0703840}.

\bibitem{GPR} S.\ Garibaldi, H.~P.~Petersson and M.~L.~Racine,
\textsl{Albert Algebras over Commutative Rings: The Last Frontier of Jordan Systems}, Cambridge U.\ Press, Cambridge, 2024.



\bibitem{Joyce} D.~D.\ Joyce, \textsl{Compact Manifolds with Special Holonomy},  Oxford University Press, Oxford, 2000.

\bibitem{KS} N.\ Knarr and M.J.\ Stroppel, Subalgebras of octonion algebras, \textsl{J.\ Algebra\ }\textbf{664} (2025), 42--74. Also available as \href{https://arxiv.org/abs/2303.00335}{arXiv:2303.00335}.


\bibitem{McC69} K.\ McCrimmon, The Freudenthal--Springer--Tits constructions of exceptional Jordan algebras, \textsl{Trans.\ Amer.\ Math.\ Soc. }\textbf{139} (1969), 495--510.

\bibitem{McC04}
K. McCrimmon, \textsl{A Taste of Jordan Algebras}, Springer, Berlin, 2004.

\bibitem{PST} R.\ Parimala, R.\ Sridharan and M.~L.~Thakur, Tits' constructions of Jordan algebras and $F_4$ bundles on the plane, \textsl{Compositio Math. }\textbf{119} (1999), 13--40.

\bibitem{Pe93} H.P.\ Petersson, Composition algebras over algebraic curves of genus zero, \textsl{Trans.\ Amer.\ Math.\ Soc.\ }\textbf{337}(1) (1993), 473--493.

\bibitem{Pe19} H.~P.~Petersson, A survey on Albert algebras, \textsl{Transform.\ Groups\ }  \textbf{24}(1) (2019), 219--278.

\bibitem{PR86} H.~P.~Petersson and M.~L.~Racine, Jordan algebras of degree 3 and the Tits process, \textsl{J.\ Alg.\ }\textbf{98}(1) (1986), 211--243.


\bibitem{Pu98} S.\ Pumpl\"un, Quaternion algebras over elliptic curves, \textsl{Comm.\ Alg.\ }\textbf{26}(12) (1998), 4357--4373.

\bibitem{Pu08} S.\ Pumpl\"un, Albert algebras over curves of genus zero and one, \textsl{J.\ Algebra }\textbf{320}(12) (2008), 4178--4214.  Also available as \href{https://arxiv.org/abs/0709.2308}{arXiv:0709.2308}.

\bibitem{Pu24} S.\ Pumpl\"un, Colour algebras over rings, \textit{Axioms} \textbf{15} (2026), no.\ 2, Paper No.\ 139. \href{https://doi.org/10.3390/axioms15020139}{doi:10.3390/axioms15020139}.  Also available as \href{https://arxiv.org/abs/2409.14574}{arXiv:2409.14574}.


\bibitem{Tits} J.\ Tits, Alg\`ebres alternatives, alg\`ebres de Jordan et alg\`ebres de Lie exceptionnelles.\ I.\ Construction, \textsl{Nederl.\ Akad.\ Wetensch.\ Proc.\ Ser.\ A }\textbf{69} (1966), 223--237.

\bibitem{Y} I.~Yokota, \textsl{Exceptional Lie Groups}, Lecture Notes in Mathematics 2369, Springer, Berlin, 2025.  Also available as \href{https://arxiv.org/abs/0902.0431}{arXiv:0902.0431}.

\bibitem{Z} M.\ Zorn, Alternativk\"orper und quadratische Systeme, \textsl{Abh.\ Math.\ Semin.\ Univ.\ Hambg.\ }\textbf{9}(3--4) (1933), 395--402.

\end{thebibliography}
\end{document}